\documentclass[11pt,a4paper]{article}

\usepackage[utf8]{inputenc}
\usepackage[T1]{fontenc}
\usepackage[english]{babel}
\usepackage{amssymb,amsmath,amsthm}
\usepackage{xspace}
\usepackage[margin=2.8cm]{geometry}

\theoremstyle{plain}
\newtheorem{theorem}{Theorem}
\newtheorem{corollary}{Corollary}
\newtheorem{proposition}{Proposition}
\newtheorem{lemma}{Lemma}

\theoremstyle{definition}
\newtheorem{remark}{Remark}
\newtheorem{example}{Example}

\newcommand{\ds}{\displaystyle}

\newcommand{\calA}{ \mathcal{A} }
\newcommand{\calC}{ \mathcal{C} }
\newcommand{\calJ}{ \mathcal{J} }
\newcommand{\calK}{ \mathcal{K} }
\newcommand{\calL}{ \mathcal{L} }

\newcommand{\pt}{ {\partial}_{t} }
\newcommand{\px}{ {\partial}_{x} }
\newcommand{\gradx}{ \nabla }
\newcommand{\deltax}{\partial^{2}_{xx}}

\newcommand{\abs}[1]{ \left|#1\right| }
\newcommand{\rank}{ \mathrm{rank} \,}
\newcommand{\vect}{ \mathrm{span} }

\newcommand{\field}[1]{\ensuremath{\mathbb{#1}}}
\newcommand{\C}{\field{C}\xspace}
\newcommand{\R}{\field{R}\xspace}
\newcommand{\mat}[1]{ {\mathcal{M}}_{#1}( \mathbb{R} ) }

\newcommand{\notc}[2]{ { \left(#1\right) }_{#2} }
\newcommand{\nota}[2]{ {\left( #1 \right)}_{#2}^{\pm} }
\newcommand{\notb}[2]{ \mathsf{ {#1}_{#2} } }
\newcommand{\notd}[2]{ {\left(\left(#1\right)\right)}_{#2} }
\newcommand{\kalm}[3]{ {\left[\, #1 \, : \, #2 \, \right]}_{#3} }

\title{Null-controllability for some linear parabolic systems\\
with controls acting on different parts of the domain\\
and its boundary}

\author{Guillaume Olive\thanks{LATP, UMR 6632, Aix-Marseille Universit\'e,
Technop\^ole Ch\^ateau-Gombert, 39, rue F. Joliot-Curie,
13453 Marseille Cedex 13, France. E-mail: \texttt{golive@cmi.univ-mrs.fr}}}

\date{}

\begin{document}

\maketitle

\begin{abstract}
In this work we study the null-controllability properties of linear parabolic
systems with constant coefficients in the case where several controls are
acting on different distributed subdomains and/or on the boundary.
We prove a Kalman rank condition in the one-dimensional case.
In the case where only distributed controls are considered we also establish
related results such as a Carleman estimate.
\end{abstract}

\noindent\textbf{Keywords:}
Kalman rank condition; Boundary controllability; Distributed controllability;
Carleman estimate

\medskip
\section{Introduction}

Let $n \in \mathbb{N}^*$, $n_D, n_B \in \mathbb{N}$, be respectively the number of equations, the number of distributed controls and the number of boundary controls we will consider. Let $\Omega \subset \mathbb{R}^N$ ($N \in \mathbb{N}^*$) be a bounded connected open set with boundary $\partial \Omega$ regular enough. For every $T> 0$ we denote $Q_T=(0,T) \times \Omega$ and $\Sigma_{T}=(0,T) \times \partial \Omega$. Let $\omega_1,\ldots, \omega_{n_D}$ be given non empty open subsets of $\Omega$ (possibly disjoint) and let $\Gamma_1, \ldots, \Gamma_{n_B}$ be given non empty open subsets of $\partial \Omega$.
We consider the following type of $n\times n$ parabolic system:
\begin{equation}\label{syst plrs control}
\left\{
\begin{array}{ll}
&\pt y = \Delta y + Ay + D_1 u_1(t,x) 1_{\omega_1}(x) +\ldots+ D_{n_D} u_{n_D}(t,x) 1_{\omega_{n_D}}(x) \mbox{ in } Q_{T}, \\
&y=B_1v_1(t,x)1_{\Gamma_1}(x)+\ldots+B_{n_B}v_{n_B}(t,x)1_{\Gamma_{n_B}}(x) \mbox{ on } \Sigma_{T}, \\
\end{array}
\right.
\end{equation}

where $y$ is the state, $A \in \mat{n}$ is a coupling matrix. For every $i \in \{1, \ldots, n_D\}$, $D_i \in \R^n$ and $u_i \in L^2(Q_{T})$ is a \textit{distributed control} acting on $\omega_i$. For every $j \in \{1, \ldots, n_B\}$, $B_j \in \R^n$ and $v_j \in L^2(\Sigma_{T})$ is a \textit{boundary control} acting on $\Gamma_j$.

Let us recall that, for every $T>0$, for every $y_0 \in  L^2(\Omega;\R^n)$, $u_i \in L^2(Q_{T})$, $i \in \{1, \ldots, n_D\}$, and $v_j \in L^2(\Sigma_{T})$, $j \in \{1, \ldots, n_B\}$, there exists a unique solution to (\ref{syst plrs control}) $y \in L^2(Q_{T};\R^n) \, \cap \, C^0([0,T];H^{-1}(\Omega; \R^n))$, defined by transposition, which satisfies $y(0)=y_0$ (see for instance \cite[Appendix]{FCGBdT} for more details).

Let be given $y_0 \in L^2(\Omega;\R^n)$ and $T> 0$, it will be said that system (\ref{syst plrs control}) is \emph{null-controllable on $(0,T)$ from the state $y_0$} if there exists $u_i \in L^2(Q_{T})$ for every $i \in \{1, \ldots, n_D\}$ and there exists $v_j \in L^2(\Sigma_{T})$ for every $j \in \{1, \ldots, n_B\}$ such that the corresponding solution to (\ref{syst plrs control}) with $y(0)=y_0$ satisfies $y(T)=0$.
Let be given $T> 0$, it will be said that system (\ref{syst plrs control}) is \emph{null-controllable on $(0,T)$}  if for every $y_0 \in L^2(\Omega;\R^n)$ system (\ref{syst plrs control}) is null-controllable on $(0,T)$ from the state $y_0$.
It will be said that system (\ref{syst plrs control}) is \emph{null-controllable} if for every $T> 0$ system (\ref{syst plrs control}) is null-controllable on $(0,T)$.

Let us recall that the scalar operator $-\Delta$ with homogeneous Dirichlet boundary condition admits a sequence of eigenvalues $\{\lambda_k\}_{k \in \mathbb{N}^*} \subset {\mathbb{R}}_{+}^{*}$ such that the associated sequence of normalized eigenfunctions $\{\phi_k\}_{k \in \mathbb{N}^*}$ is a Hilbert basis of $L^2(\Omega)$.

\paragraph{Matrices notation}
For any $q, N_1, N_2 \in \mathbb{N}^*$, for any matrix  $A \in \mat{N_1}$, $B \in \mat{N_1 \times N_2}$, we denote $[A|B]$ the matrix whose first columns are those of $A$ and the following ones are those of $B$ and we define
\begin{equation}\label{matrices notations}
\footnotesize 
\begin{array}{c}
\kalm{A}{B}{q}= \left[ B | AB | \cdots | A^{q-1}B \right]
\in \mat{N_1 \times N_2q},
\quad
\notd{B}{q}=\left[\begin{array}{ccccc}
B & 0 & \cdots & \cdots & 0 \\
0 & B & \ddots & & \vdots \\
\vdots & \ddots & \ddots & \ddots & \vdots \\
\vdots &  & \ddots & \ddots &0 \\
0 & \cdots &  \cdots & 0 &B
\end{array} \right]
\in \mat{N_1q \times N_2q}
,
\\
\notc{B}{q}= \left[ \begin{array}{c} B \\ B \\ \vdots \\ \vdots \\ B  \end{array}\right]
\in \mat{N_1q \times N_2}
, \quad
\nota{B}{q}= \left[ \begin{array}{c} - B \\ B \\ \vdots \\ \vdots \\  {(-1)}^{q} B  \end{array}\right]
\in \mat{N_1q \times N_2}
,
\\
\calA_q=\left[\begin{array}{ccccc}
A_1 & 0 & \cdots & \cdots & 0 \\
0 & A_2 & \ddots & & \vdots \\
\vdots & \ddots & \ddots & \ddots & \vdots \\
\vdots &  & \ddots & \ddots &0 \\
0 & \cdots &  \cdots & 0 & A_q
\end{array} \right]
\in \mat{N_1q}
\mbox{ with }
A_k=-\lambda_k I +A
\in \mat{N_1}.
\end{array}
\end{equation}

Note that
\begin{equation}\label{rem rank ))}
\rank{ \notd{ \kalm{A}{B}{n} }{q} }=q \, \rank{ \kalm{A}{B}{n} }, \quad \forall q \in \mathbb{N}^*.
\end{equation}

In controllability theory of linear ordinary differential systems there exists a complete characterization of controllability, this is the so-called \textit{Kalman rank condition} (see for instance \cite[Corollary 1.4.10]{TW}), that is to say, if $A\in \mat{n}$ and $B\in \mat{n \times m}$ ($n,m \in \mathbb{N}^*$), then the linear ordinary differential system $ y^{\prime}=Ay+Bu$ is controllable if and only if
\begin{equation*}
\rank{ \kalm{A}{B}{n} }=n.
\end{equation*}

To give an appropriate condition of this type in the framework of linear parabolic systems has been a subject of several research. Recently in \cite{AKBDGB} a Kalman rank condition has been proved for the distributed null-controllability with only one control region: the authors proved that the system
\begin{equation*}
\left\{
\begin{array}{ll}
&\pt y = \Delta y + Ay + Du_1(t,x) 1_{\omega_1}(x)  \mbox{ in } Q_{T}, \\
\noalign{\smallskip}
&y=0 \mbox{ on } \Sigma_{T}, \\
\noalign{\smallskip}
\end{array}
\right.
\end{equation*}
is null-controllable if and only if
\begin{equation}\label{recall distributed Kalman condition}
\rank{ \kalm{A}{D}{n} }=n,
\end{equation}
see \cite[Theorem 1.1]{AKBDGB} and \cite[Proposition 2.2]{AKBDGB}.
In fact they proved a more general Kalman rank condition for linear parabolic systems with different coefficients in front of the operator $-\Delta$, for more details see \cite{AKBDGB}.

In \cite{FCGBdT} the authors gave a necessary and sufficient condition for the boundary null-controllability in the one-dimensional case and for two equations (see \cite[Theorem 1.1]{FCGBdT}). Through this theorem they also showed that the Kalman rank condition for distributed null-controllability is a necessary condition for the boundary null-controllability but it is not sufficient.
The result of \cite{FCGBdT} has been improved in \cite{AKBGBdT} where the authors proved a new Kalman rank condition for boundary null-controllability of $n \times n$ linear parabolic system (still in the one-dimensional case though), that is: the system
\begin{equation*}
\left\{
\begin{array}{ll}
&\pt y = \deltax y + Ay \mbox{ in } (0,T) \times (0,1), \\
\noalign{\smallskip}
&y(t,0)=B_1v_1(t), \quad y(t,1)=B_2v_2(t) \mbox{ on } (0,T), \\
\noalign{\smallskip}
\end{array}
\right.
\end{equation*}
is null-controllable if and only if
\begin{equation}\label{recall boundary Kalman condition}
\rank{ \kalm{\calA_q}{ \left( \notc{B_1}{q} \middle| \nota{B_2}{q} \right) }{nq} }=nq, \quad \forall q \in \mathbb{N}^*,
\end{equation}
see \cite[Theorem 6.3]{AKBGBdT}.

We also point out reference \cite{AKBDGB2} where the authors worked on the case of regular time-dependent matrices $A=A(t)$ and $D=D(t)$ (with one distributed control) and they proved that the system
\begin{equation*}
\left\{
\begin{array}{ll}
&\pt y = \Delta y + A(t)y + D(t)u_1(t,x) 1_{\omega_1}(x)  \mbox{ in } Q_{T}, \\
\noalign{\smallskip}
&y=0 \mbox{ on } \Sigma_{T}, \\
\noalign{\smallskip}
\end{array}
\right.
\end{equation*}
is null-controllable if
\begin{equation}\label{kalm for t-depend}
\exists t_0 \in [0,T], \quad \rank{\calK(t_0) }=n,
\end{equation}
where
$$\calK(t)=\left[ \calK_0(t) \middle| \ldots \middle| \calK_{n-1}(t) \right]$$
with
$$
\left\{\begin{array}{rcl}
\ds \calK_0(t)&=&D(t) \\
\ds \calK_i(t)&=&\ds A(t)\calK_{i-1}(t)-\frac{d}{dt} \calK_{i-1}(t), \quad \forall i \in \left\{1,\ldots,n-1\right\}
\end{array}\right.
$$
see \cite[Theorem 1.2]{AKBDGB2}.

Finally, let us mention that the null-controllability properties of linear parabolic systems have been also studied in the case of space varying coefficients and one distributed control force. However few results are known, even for the distributed controllability. Sufficient conditions to the distributed null-controllability are given in \cite{AKBD}, \cite{AKBDK}, \cite{GBPG},\cite{Gue} and \cite{dT} for systems of two equations and see \cite{GBdT} for $n \times n$ systems.
To our knowledge \cite{dT} is also the first result using Carleman estimates for two coupled parabolic equations.
Concerning the boundary null-controllability of parabolic systems, let us mention \cite{ABL} where the authors prove a result without any restriction on the dimension (but under some geometric condition, see \cite{ABL} for more details).

In the present work we try to give an overview of the controllability properties of systems like (\ref{syst plrs control}). One of the main task of this work will be to prove a Kalman rank condition for system (\ref{syst plrs control}) which will then generalize the previously known Kalman conditions (\ref{recall distributed Kalman condition}) and (\ref{recall boundary Kalman condition}).

\section{Statements of the results}

\subsection{Main result}

The first and main result of this work concerns system (\ref{syst plrs control}) in the case $N=1$. As a consequence it is equivalent to consider the following system:
\begin{equation}\label{syst2}
\left\{
\begin{array}{ll}
&\pt y = \deltax y + Ay + D_1 u_1(t,x) 1_{\omega_1}(x) +\ldots+ D_{n_D} u_{n_D}(t,x) 1_{\omega_{n_D}}(x) \mbox{ in } Q_{T}, \\
\noalign{\smallskip}
&y(t,0)=B_1^L v_1(t) + \cdots + B_{n_L}^L v_{n_L}(t), \quad y(t,1)=B_1^R w_1(t) + \cdots + B_{n_R}^R w_{n_R}(t) \mbox{ on } (0,T), \\
\noalign{\smallskip}
\end{array}
\right.
\end{equation}
with $n_B \leq n_L+n_R \leq 2n_B$, and where we take $\Omega=(0,1)$ for the sake of simplicity. Let us denote $D=\left[ D_1 | D_2 | \cdots | D_{n_D}\right] \in \mat{n \times n_D}$, $B^{L}=\left[ B^{L}_1 | \cdots | B^L_{n_L}\right]$ and $B^{R}=\left[ B^{R}_1 | \cdots | B^R_{n_R}\right]$.
Then, the result reads:

\begin{theorem}[Kalman rank condition]\label{boundary Kalman condition theorem}
System (\ref{syst2}) is null-controllable if and only if
\begin{equation}\label{boundary Kalman condition}
\rank{ \left[ \quad \kalm{\calA_q}{  \left( \notc{B^L}{q} \middle| \nota{B^R}{q} \right) }{nq} \quad \middle| \quad \notd{ {\left[A:D\right]}_{n} }{q} \quad \right] } = nq, \quad \forall q \in \mathbb{N}^*,
\end{equation}
(where we used the notations introduced above).
\end{theorem}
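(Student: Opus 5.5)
Since controllability of the system is the standard dual notion, we reduce null-controllability of (\ref{syst2}) to an observability inequality for the adjoint system
\begin{equation*}
-\pt \varphi = \deltax \varphi + A^{*}\varphi \mbox{ in } Q_T, \qquad \varphi(t,0)=\varphi(t,1)=0,\quad \varphi(T)=\varphi_T,
\end{equation*}
namely
\begin{equation*}
\|\varphi(0)\|^2 \le C\Big(\sum_{i}\int_0^T\!\!\int_{\omega_i} |D_i^{*}\varphi|^2 + \int_0^T |(B^L)^{*}\px\varphi(t,0)|^2 + \int_0^T |(B^R)^{*}\px\varphi(t,1)|^2\Big).
\end{equation*}
We work in the eigenbasis: $\phi_k(x)=\sqrt2\sin(k\pi x)$, $\lambda_k=k^2\pi^2$, $\px\phi_k(0)=\sqrt2 k\pi$, $\px\phi_k(1)=(-1)^k\sqrt2 k\pi$. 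Writing $\varphi_T=\sum_k a_k\phi_k$ with $a_k\in\R^n$, we get $\varphi(t)=\sum_k e^{(T-t)(A^*-\lambda_k)}a_k\phi_k$. The boundary observations therefore become exponential sums $\sum_k k\,(B^{L})^{*}e^{(T-t)A_k^*}a_k$ and $\sum_k (-1)^k k\,(B^{R})^{*}e^{(T-t)A_k^*}a_k$. This is exactly where the blocks $\notc{B^L}{q}$ and $\nota{B^R}{q}$ come from.

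\emph{Necessity.} Suppose (\ref{boundary Kalman condition}) fails for some $q$. Then there is a nonzero $\xi=(\xi_1,\dots,\xi_q)\in\R^{nq}$ orthogonal to the range of the matrix in (\ref{boundary Kalman condition}). Orthogonality to $\notd{[A:D]_n}{q}$ gives $D^*(A^*)^j\xi_k=0$ for each $k$ and $j$, hence $D^*e^{sA^*}\xi_k=0$ for all $s$. Orthogonality to the Kalman block in $\calA_q$ gives $\sum_k (B^L)^*e^{sA_k^*}\xi_k=0$ and $\sum_k(-1)^k (B^R)^*e^{sA_k^*}\xi_k=0$ for all $s$. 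Now take $\varphi_T=\sum_{k\le q}(k\pi)^{-1}\xi_k\phi_k$. All the observation terms vanish identically while $\varphi(0)\ne0$, so observability fails.

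\emph{Sufficiency.} Assume (\ref{boundary Kalman condition}). Following \cite{AKBGBdT}, we use the moment method on the one-dimensional system, combined with the distributed part.
\begin{itemize}
\item First reduce, by the Kalman decomposition, $A$ to block form. Let $X=\mathrm{Im}[A:D]_n$, which is $A$-invariant; distributed controls act only on $X$. On $X$ the distributed Kalman condition holds, so by \cite{AKBDGB} the $X$-component can be steered to zero once the quotient component is controlled.
\item In the quotient $\R^n/X$, the induced system carries only boundary controls. Condition (\ref{boundary Kalman condition}) modulo $\notd{X}{q}$ is exactly the boundary Kalman condition (\ref{recall boundary Kalman condition}) for the quotient matrices with projected $B^L,B^R$ (several columns per side). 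The moment-method result \cite[Theorem 6.3]{AKBGBdT} then gives null-controllability of the quotient component; its proof extends directly to several boundary controls per side.
\item Finally, since the subsystem on $X$ is driven by a source coming from the quotient, we combine the two steps via a cascade argument. Control the quotient to zero on $(0,T/2)$ with zero distributed control and the quotient kept at zero afterwards. Then control the $X$-part on $(T/2,T)$ with distributed controls, using the null-controllability of the $X$ subsystem from any state in $L^2$ (the boundary data are smooth enough after regularization).
\end{itemize}

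The main obstacle is the quotient step. We must verify that the rank condition passes correctly to the quotient, using a change of basis $P$ adapted to $X$ so that $\calA_q$ becomes block triangular. We also need the moment method (biorthogonal families to $e^{-\lambda_k t}$ with $\sum 1/\lambda_k<\infty$, together with the generalized eigenvector/Jordan structure of $A_k^*$) to handle the combined left/right observations. The quotient boundary trace must also be interpreted properly, since the boundary control enters the $X$-part as well; this is handled by letting the $X$-component absorb it as a source term and then using duality with a Carleman-type observability for the cascade.
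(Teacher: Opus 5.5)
Your necessity argument is the paper's Step 4. You use orthogonality to the matrix in (\ref{boundary Kalman condition}) directly instead of passing through (\ref{boundary Kalman condition 2}), which changes nothing. Your sufficiency strategy is also the paper's: split off the $A$-invariant space $X=\mathrm{Im}\,[A:D]_n$, check that the complementary block satisfies the boundary Kalman condition, drive it to zero on $(0,T/2)$ with the boundary controls, then finish with distributed controls.

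Where you differ is in checking the quotient condition. Since $AX\subset X$, you have $\pi_q\calA_q=\overline{\calA}_q\pi_q$ for the blockwise projection $\pi_q$ modulo $X$. The column space of the matrix in (\ref{boundary Kalman condition}) contains $\ker\pi_q=X^q$. So the rank condition is equivalent, after Cayley--Hamilton adjusts the number of powers, to the boundary Kalman condition for the projected data. The paper proves the same equivalence dually, through the Hautus test and the eigenvector bijection of Lemma~\ref{equivalences}. Your primal version is shorter. It also avoids the paper's normalization that the $P_1^{-1}B_j$ are unit vectors of the boundary block: any $X$-component of the boundary data just feeds $z_X$ on $(0,T/2)$, where nothing is required of $z_X$. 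For the same reason, your closing remark about a source term and a Carleman estimate for the cascade is unnecessary.

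The step that does not go through as written is the distributed one. \cite{AKBDGB} treats controls supported in a single region $\omega$. Here the $D_i$ act on different, possibly disjoint, $\omega_i$, so knowing that $[A|_X:D]$ has full rank on $X$ does not let you cite it. You need the finer flag used in Lemma~\ref{lemma for the construction of a basis}: $X_1\subset X_1+X_2\subset\cdots\subset X$, where $X_1+\cdots+X_k$ is the Krylov space of $D_{i_1},\dots,D_{i_k}$.
\begin{itemize}
\item These spaces are $A$-invariant, so $A|_X$ is block upper triangular in an adapted basis.
\item Each successive quotient is cyclic, generated by the image of the single vector $D_{i_k}$, which acts on the single region $\omega_{i_k}$.
\item $D_{i_k}$ has no component in later blocks.
\end{itemize}
So apply the one-control, one-region result to the last block, keep its control at zero afterwards, and iterate your time-splitting from the last block back to the first.

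Your regularization remark should also be made precise. It is the state, not the boundary data, that needs smoothing: at time $T/2$, $z_X$ lies only in $H^{-1}$. Let it evolve freely for a short interval, with $z_Y\equiv 0$ and zero boundary data, so that it returns to $L^2$ before you invoke distributed null-controllability.
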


\begin{remark}\label{remark about rank 1}\label{another characterization of the boundary Kalman condition}

\begin{enumerate}

\item
Theorem \ref{boundary Kalman condition theorem} contains both Kalman conditon (\ref{recall distributed Kalman condition}) for distributed null-controllability and Kalman condition (\ref{recall boundary Kalman condition}) for boundary null-controllability, see (\ref{rem rank ))}).

\item
We can also reformulate condition (\ref{boundary Kalman condition}) as follows:
\begin{equation}\label{boundary Kalman condition 2}
\rank{ \kalm{\calA_q}{ \left( \notc{B^L}{q} \middle| \nota{B^R}{q}\middle| \notd{D}{q} \right) }{nq} } = nq, \quad \forall q \in \mathbb{N}^*.
\end{equation}
This is due to the equalities $\rank{ \kalm{\calA_q}{\notd{D}{q}}{nq} } = \rank{ \notd{\kalm{A}{D}{n}}{q} }$ for all $q \in \mathbb{N}^*$.
This characterization will be used to prove Theorem \ref{boundary Kalman condition theorem}.

\item
Let us observe that condition (\ref{boundary Kalman condition}) is only algebraic. In particular it does not depend on $\omega_1, \ldots, \omega_{n_D}$.

\item
Condition (\ref{boundary Kalman condition}) is checkable thanks to the following fact: to check condition (\ref{boundary Kalman condition}) is equivalent to check it for a particular $q=q_0$ which is such that
\begin{equation}\label{eigenvalues of A}
\mu_i-\mu_j \neq \lambda_k - \lambda_l, \quad \forall k,l \in \mathbb{N}^* \mbox{ with } k>q_0 \mbox{ and } l \neq k, \quad \forall i,j \in \{1,\ldots,n\},
\end{equation}
where $\{\mu_k\}_{k \in \{1,\ldots,n\}} \subset \C$ is the set of the eigenvalues of $A$.
To prove this fact one can adapt the proof of \cite[Corollary 3.3]{AKBGBdT}, by using the characterization (\ref{boundary Kalman condition 2}).
Moreover one can see that such a $q_0$ does always exist, see \cite[Proposition 3.2]{AKBGBdT} for instance.

\end{enumerate}
\end{remark}
Let us illustrate the last item of Remark \ref{remark about rank 1} through the following example:

\begin{example}\label{example KRC}
Let $T>0$ and $\omega \subset (0,1)$ a non empty open subset. Consider the following $3 \times 3$ one-dimensional parabolic system:
\begin{equation}\label{example system}
\left\{
\begin{array}{ll}
&
\left. \begin{array}{l}
\pt y_1 = \deltax y_1 +2y_1 +6y_2 +2y_3, \\
\noalign{\smallskip}
\pt y_2 = \deltax y_2 +4y_1  -2y_3, \\
\noalign{\smallskip}
\pt y_3 = \deltax y_3 +2y_1 -3/2y_2 +2y_3 + u(t,x)1_{\omega}(x)  , \\
\noalign{\smallskip}
\end{array}
\right\}
\mbox{ in } (0,T) \times (0,1), \\
&
\left. \begin{array}{l}
y_1(t,0)=v(t), \quad y_1(t,1)=0, \\
\noalign{\smallskip}
y_2(t,0)=0 \quad y_2(t,1)=0, \\
\noalign{\smallskip}
y_3(t,0)=0 \quad y_3(t,1)=0, \\
\noalign{\smallskip}
\end{array}
\right\}
\mbox{ on } (0,T),
\end{array}
\right.
\end{equation}
so that
$$
A=\left[\begin{array}{ccc}
2 & 6 & 2 \\
4 & 0 & -2 \\
2 & -3/2 & 2 
\end{array} \right],
\quad D=\left[\begin{array}{c} 0 \\ 0 \\ 1 \end{array} \right],
\quad B_L=\left[\begin{array}{c} 1 \\ 0 \\ 0 \end{array} \right],
\quad B_R=0.
$$
We can see that if only one control is acting then this system is not null-controllable. Indeed we have $\rank{ \kalm{A}{D}{3} }=2 \neq 3$ and $\rank{ \kalm{A}{B}{3} }=2 \neq 3$ so the distributed and boundary Kalman conditions fail.
Nevertheless we have $\rank{ \left[ \ \kalm{A}{B}{3} \ \middle| \ \kalm{A}{D}{3} \ \right] }=3$ and the eigenvalues of $A$ are $-5,3$ and $6$ so that condition (\ref{eigenvalues of A}) is satisfied for $q_0=1$ and thus, by the previous remark, condition (\ref{boundary Kalman condition}) is also satisfied.
\end{example}

We make the following remark about this example, this gives a good idea of the proof of Theorem \ref{boundary Kalman condition theorem}:

\begin{remark}

Observe that in fact the matrix $A$ of Example \ref{example KRC} is equivalent to the matrix
$$C=\left[\begin{array}{ccc}
6 & 0 & 0 \\
6 & 0 & 15 \\
-2 & 1 & -2
\end{array} \right]$$
through the following change of basis:
$$P=\left[\begin{array}{ccc}
1 & 0 & 2 \\
0 & 0 & -2 \\
0 & 1 & 2
\end{array} \right]
=\left[ B| D | AD \right] \quad \mbox{ (with the notations of Example \ref{example KRC}) }.$$
As a consequence the null-controllability of system (\ref{example system}) is equivalent to the null-controllability of the system
$$
\left\{
\begin{array}{ll}
&\pt z = \deltax z + 
\left[\begin{array}{ccc}
6 & 0 & 0 \\
6 & 0 & 15 \\
-2 & 1 & -2
\end{array} \right]
z +
\left[\begin{array}{c} 0 \\ 1 \\ 0 \end{array} \right]
u(t,x) 1_{\omega}(x) \mbox{ in } (0,T) \times (0,1), \\
\noalign{\smallskip}
&z(t,0)=
\left[\begin{array}{c} 1 \\ 0 \\ 0 \end{array} \right]
v(t),
\quad
z(t,1)=0 \mbox{ on } (0,T), \\
\noalign{\smallskip}
\end{array}
\right.
$$
And we can see that we can lead the first component of this system to zero at time $T/2$ (for instance) by using only the boundary control $v$. Then, it remains to prove that the system
$$
\left\{
\begin{array}{ll}
&\pt \hat{z} = \deltax \hat{z} + 
\left[\begin{array}{cc}
0 & 15\\
1 & -2
\end{array} \right]
\hat{z}
+ \ds
\left[\begin{array}{c} 1 \\ 0 \end{array}\right]
u(t,x)1_{\omega}(x)
\mbox{ in } \left(\frac{T}{2},T\right) \times (0,1), \\
\noalign{\smallskip}
&\ds \hat{z}(t,0)=0,
\quad
\hat{z}(t,1)=0 \mbox{ on } \left(\frac{T}{2},T\right), \\
\noalign{\smallskip}
\end{array}
\right.
$$
is null-controllable, which can be done by checking the distributed Kalman condition.

\end{remark}

\subsection{More results in the case $n_B=0$}\label{section n_B}

Let us now consider the case without boundary control with an arbitrary space dimension $N$, that is
\begin{equation}\label{syst45}
\left\{
\begin{array}{ll}
&\pt y = \Delta y + Ay + D_1 u_1(t,x) 1_{\omega_1}(x) +\ldots+ D_{n_D} u_{n_D}(t,x) 1_{\omega_{n_D}}(x) \mbox{ in } Q_{T}, \\
\noalign{\smallskip}
&y=0 \mbox{ on } \Sigma_{T}. \\
\noalign{\smallskip}
\end{array}
\right.
\end{equation}
From the proof of Theorem \ref{boundary Kalman condition theorem} we will see that in fact Theorem \ref{boundary Kalman condition theorem} still holds without any restriction on $N$ if we have no boundary controls:

\begin{corollary}\label{Distributed Kalman rank condition theorem}
Let $N \geq 1$ be arbitrary. Then, system (\ref{syst45}) is null-controllable if and only if
\begin{equation}\label{distributed Kalman condition}
\rank{ \kalm{A}{D}{n} } = n.
\end{equation}
\end{corollary}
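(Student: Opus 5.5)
Both directions follow the change-of-basis idea sketched in the Remark after Example \ref{example KRC}, now without boundary controls and in arbitrary dimension $N$.

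\textbf{Necessity.} Assume $\rank{\kalm{A}{D}{n}} = r < n$. The space $V=\mathrm{Im}\,\kalm{A}{D}{n}$ is $A$-invariant and contains every column $D_i$, so we can pick a nonzero $\xi\in\R^n$ with $\xi^{T}V=0$; then $\xi^T A^k D_i=0$ for all $k$ and all $i$. Since $V$ is invariant under $A$, its orthogonal complement is invariant under $A^{T}$, and we may take $\xi$ to be a (possibly complex) eigenvector of $A^T$, $A^T\xi=\mu\xi$. Consider the adjoint solution $\varphi(t)=e^{-(\lambda_1-\bar\mu)(T-t)}\phi_1\,\xi$ (using real and imaginary parts if needed). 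It solves $-\pt\varphi=\Delta\varphi+A^T\varphi$ with Dirichlet condition, and its observation $D_i^T\varphi 1_{\omega_i}$ vanishes identically. The observability inequality therefore fails for the initial datum $\phi_1\xi\neq 0$, so the system is not null-controllable.

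\textbf{Sufficiency.} We argue by induction on the number $n_D$ of controls; the case $n_D=1$ is exactly \cite[Theorem 1.1]{AKBDGB}, valid for any $N$.

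\emph{Step 1 (Brunovsky-type basis).} Let $r_1=\rank{\kalm{A}{D_1}{n}}$ and $V_1=\vect\{D_1,AD_1,\ldots,A^{r_1-1}D_1\}$, which is $A$-invariant. Complete this to a basis $P$ of $\R^n$ by adding vectors of the form $A^kD_i$, $i\ge 2$, as in the matrix $P=[B|D|AD]$ of that Remark. In the new variable $z=P^{-1}y$ the matrix becomes block triangular,
\[
C=P^{-1}AP=\left[\begin{array}{cc} C_{11} & C_{12}\\ 0 & C_{22}\end{array}\right],
\]
where $(C_{11},e_1)$ is controllable (companion form). The second block of $P^{-1}D_i$, call it $\tilde D_i$, satisfies $\rank{\kalm{C_{22}}{(\tilde D_2|\cdots|\tilde D_{n_D})}{n-r_1}}=n-r_1$, because the full Kalman matrix has rank $n$.

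\emph{Step 2 (the $z_2$ block on $(0,T/2)$).} The lower component $z_2$ does not see $z_1$, and $u_1$ enters only $z_1$. By the induction hypothesis, $z_2$ can be driven to $0$ at time $T/2$ using $u_2,\ldots,u_{n_D}$ on $\omega_2,\ldots,\omega_{n_D}$. After $T/2$ we set $u_2=\cdots=u_{n_D}=0$, so $z_2\equiv0$ on $(T/2,T)$.

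\emph{Step 3 (the $z_1$ block on $(T/2,T)$).} On $(T/2,T)$ the upper component $z_1$ solves $\pt z_1=\Delta z_1+C_{11}z_1+e_1u_1 1_{\omega_1}$, since $C_{12}z_2=0$. Its Kalman condition holds, so \cite{AKBDGB} drives it to $0$ at time $T$.

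\emph{Main obstacle.} The delicate point is Step 1: the inherited control vectors $\tilde D_i$ each act on their own $\omega_i$, and they may also enter the $z_1$ block. This is harmless, since on $(0,T/2)$ $z_1$ evolves freely and only its state at $T/2$ (in $L^2$) matters. What must be checked carefully is that the reduced pair $(C_{22},\tilde D)$ again satisfies the rank hypothesis. Then the induction stays within the same class of systems, with fewer controls and a smaller size, and it closes.
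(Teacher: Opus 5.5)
Your proof is correct and follows essentially the paper's route. Both arguments use a Kalman-adapted change of basis making $A$ block upper triangular with a companion block for a retained control. They control the decoupled lower block first and the upper block last via the single-control result of \cite{AKBDGB}, and for necessity they use a one-mode adjoint solution $\phi_1$ times an unobservable vector. Your induction on $n_D$, which peels off the Krylov subspace of $D_1$ and passes to the quotient, is a recursive version of the paper's one-shot basis construction in Lemma \ref{lemma for the construction of a basis}.

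One small slip: with $A^T\xi=\mu\xi$ the time factor must be $e^{-(\lambda_1-\mu)(T-t)}$, not $e^{-(\lambda_1-\bar\mu)(T-t)}$.
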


On the other hand, when the Kalman condition (\ref{distributed Kalman condition}) is not fulfilled it is possible to characterize the states that can be driven to $0$:

\begin{proposition}\label{remark about the controllable states}
Assume that $N \geq 1$ and $\rank{ \kalm{A}{D}{n} } < n$. Then, system (\ref{syst45}) is null-controllable on $(0,T)$ from the state $y_0$ for every $T> 0$ if and only if
$$y_0 \in L^2 \left(\Omega; \vect \kalm{A}{D}{n} \right).$$
\end{proposition}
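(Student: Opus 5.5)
Let $r=\rank{\kalm{A}{D}{n}}<n$ and $E=\vect \kalm{A}{D}{n}$. The plan is to pass to Kalman canonical coordinates and separate the equation into a controllable part and a part the controls cannot reach. By Cayley--Hamilton, $E$ is invariant under $A$ and contains every column $D_i$. Choose $P\in\mat{n}$ whose first $r$ columns form a basis of $E$ and whose last $n-r$ columns complete it to a basis of $\R^n$. In the variable $z=P^{-1}y$, system (\ref{syst45}) becomes
\begin{equation*}
\pt z = \Delta z + \left[\begin{array}{cc} A_{11} & A_{12} \\ 0 & A_{22}\end{array}\right] z + \left[\begin{array}{c} \tilde D \\ 0 \end{array}\right] u, \qquad z=0 \mbox{ on } \Sigma_T ,
\end{equation*}
where $u$ denotes $(u_1 1_{\omega_1},\ldots,u_{n_D}1_{\omega_{n_D}})$, $\tilde D=[\tilde D_1|\cdots|\tilde D_{n_D}]\in\mat{r\times n_D}$, and $\rank{\kalm{A_{11}}{\tilde D}{r}}=r$. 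This last rank identity is the standard fact that $\kalm{A}{D}{n}$ in the new basis has block form with top block $\kalm{A_{11}}{\tilde D}{n}$ and zero bottom block. Cayley--Hamilton applied to $A_{11}$ then reduces the rank to the first $r$ blocks.

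Write $z=(z_1,z_2)$ with $z_1\in\R^r$ and $z_2\in\R^{n-r}$. The second block satisfies the uncontrolled equation $\pt z_2=\Delta z_2+A_{22}z_2$ with homogeneous Dirichlet data. Its solution is $z_2(t)=e^{t\Delta}e^{tA_{22}}z_2(0)$, using the Dirichlet heat semigroup applied componentwise, which commutes with the constant matrix. This operator is injective, which one can see by expanding $z_2(0)$ on the eigenbasis $\{\phi_k\}$: each coefficient is multiplied by the invertible matrix $e^{-\lambda_k t}e^{tA_{22}}$. Hence $z_2(T)=0$ forces $z_2(0)=0$, whatever the controls. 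This gives necessity, since $y_0$ must then take values in $E$, i.e. $y_0\in L^2(\Omega;E)$. Here $L^2(\Omega;E)$ is closed in $L^2(\Omega;\R^n)$ and is exactly the set of $y_0$ with $(P^{-1}y_0)_2=0$.

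For sufficiency, take $y_0\in L^2(\Omega;E)$, so $z_2(0)=0$. By uniqueness $z_2\equiv 0$ on $(0,T)$, and $z_1$ solves the reduced $r\times r$ system
\begin{equation*}
\pt z_1=\Delta z_1+A_{11}z_1+\tilde D_1 u_1 1_{\omega_1}+\cdots+\tilde D_{n_D}u_{n_D}1_{\omega_{n_D}}, \qquad z_1=0 \mbox{ on } \Sigma_T .
\end{equation*}
This system satisfies the Kalman condition, so Corollary \ref{Distributed Kalman rank condition theorem} applied with $n$ replaced by $r$ makes it null-controllable on $(0,T)$ for every $T>0$. Therefore $z(T)=0$, and so $y(T)=Pz(T)=0$. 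The degenerate case $r=0$, meaning $D=0$, is trivial: the only admissible state is $y_0=0$.

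The only point needing care is that Corollary \ref{Distributed Kalman rank condition theorem} is being used with several control regions, each column $\tilde D_i$ attached to its own $\omega_i$. That is exactly what the corollary covers, so no further obstacle is expected. The main substance is the invariance and rank reduction in the first step, together with the backward-uniqueness (injectivity) argument for $z_2$.
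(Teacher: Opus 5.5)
Your argument is correct and follows the route the paper points to, namely the Kalman-decomposition argument of \cite[Theorem 1.5]{AKBDGB}: the uncontrolled block $z_2$ must vanish initially, by backward uniqueness, and the controllable block satisfies the Kalman condition. The extension to several control regions is exactly your appeal to Corollary~\ref{Distributed Kalman rank condition theorem} for the reduced $r\times r$ system, each $\tilde D_i$ acting on its own $\omega_i$.
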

This can be proved by extending the arguments given in \cite[Theorem 1.5]{AKBDGB}.

We also can also extend the Kalman rank condition for time-dependent matrices (\ref{kalm for t-depend}): let us consider the system
\begin{equation}\label{syst99}
\left\{
\begin{array}{ll}
&\pt y = \Delta y + A(t)y + D_1(t) u_1(t,x) 1_{\omega_1}(x) +\ldots+ D_{n_D}(t) u_{n_D}(t,x) 1_{\omega_{n_D}}(x) \mbox{ in } Q_{T}, \\
\noalign{\smallskip}
&y=0 \mbox{ on } \Sigma_{T}. \\
\noalign{\smallskip}
\end{array}
\right.
\end{equation}
where $A \in \calC^{n-1}\left([0,T];\mat{n}\right)$ and $D_i \in \calC^{n}\left([0,T];\R^n\right)$.

In this case we have
\begin{theorem}\label{t-dep}
If there exists $t_0 \in [0,T]$ such that
\begin{equation}\label{kalm for t-dep}
\exists t_0 \in [0,T], \quad \rank{\calK(t_0) }=n,
\end{equation}
where
$$\calK(t)=\left[ \calK_0(t) \middle| \ldots \middle| \calK_{n-1}(t) \right]$$
with
$$
\left\{\begin{array}{rcl}
\ds \calK_0(t)&=&D(t)=\left[D_1(t) \middle| \ldots \middle| D_{n_D}(t) \right] \\
\ds \calK_i(t)&=&\ds A(t)\calK_{i-1}(t)-\frac{d}{dt} \calK_{i-1}(t), \quad \forall i \in \left\{1,\ldots,n-1\right\}
\end{array}\right.
$$
then the system (\ref{syst99}) is null-controllable at time $T$.
\end{theorem}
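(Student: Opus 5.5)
The plan is to reduce Theorem \ref{t-dep} to the single-region result of \cite[Theorem 1.2]{AKBDGB2} on a subinterval. The idea is to make all the controls act from one common open set, mirroring the way Corollary \ref{Distributed Kalman rank condition theorem} is deduced. The main difficulty is that the $\omega_i$ may be disjoint. So the controls cannot simply be restricted to a common $\omega\subset\bigcap_i\omega_i$, which may be empty. The key steps are as follows.

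First, by continuity of $\calK$ (guaranteed by $A\in\calC^{n-1}$, $D_i\in\calC^n$), $\rank \calK(t)=n$ on a neighbourhood of $t_0$, hence on some interval $[T_1,T_2]\subset[0,T]$ with $T_1<T_2$. On $(0,T_1)$ I apply no control. On $(T_2,T)$ I extend by zero: once the state vanishes at $T_2$, it stays zero. It therefore suffices to drive any $y(T_1)\in L^2(\Omega;\R^n)$ to $0$ at time $T_2$.

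Second, I pass to the dual problem. Null-controllability on $(T_1,T_2)$ is equivalent to the observability inequality
\begin{equation*}
\|\varphi(T_1)\|^2_{L^2(\Omega)^n}\le C\sum_{i=1}^{n_D}\int_{T_1}^{T_2}\!\!\int_{\omega_i}|D_i(t)^*\varphi|^2\,dx\,dt
\end{equation*}
for solutions of $-\pt\varphi=\Delta\varphi+A(t)^*\varphi$ with $\varphi=0$ on $\Sigma_T$. I fix balls $B_i\Subset\omega_i$ and prove the inequality with the sets $B_i$.

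Third, I transport each observation to a common ball $B_0$, which is the heart of the proof. For a single observed component $\psi_i=D_i^*\varphi$ localized on $B_i$, I would use a Carleman estimate with a weight $\eta$ adapted to several regions. The function $\eta$ has no critical points outside $B_0$, and the estimate for the full coupled adjoint system is first proved with observation on $B_0$. I then need to estimate the terms $\int_{B_0}|\calK_j(t)^*\varphi|^2$, for $j=0,\dots,n-1$, whose sum controls $|\varphi|^2$ by the rank condition. To do so I successively differentiate the relation $\psi_i=D_i^*\varphi$ in $t$ and use the equation, as in \cite{AKBDGB2}. This gives $\calK_j^*\varphi$ as combinations of $(\pt+\Delta)^k\psi_i$, at the price of observing these derivatives only on $B_i$, not on $B_0$.

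To bridge $B_i$ and $B_0$, I would choose a different weight for each $i$, or more simply use the cascade structure. The alternative I would actually try first is a change of variables reducing to a system controlled from one set: pick a smooth path of balls from $B_i$ to $B_0$ and use the unique-continuation-type Carleman estimate for the scalar parabolic operator. This transfers local $H^k$ information from $B_i$ to $B_0$ with a weight loss that is absorbed by the global Carleman terms.

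Concretely, in the Carleman inequality for the vector $\varphi$ observed on $B_0$, I take a weight whose sublevel geometry forces the cut-off to live in $B_0$. The local terms $\int_{B_0}|\calK_j^*\varphi|^2$ are then bounded by interpolation with the terms on $B_i$, using the fact that $\Omega$ is connected. This step, comparing local energies of one component across disjoint regions, is the main obstacle. I expect the key quantitative tool to be the observability of the heat equation from any open set, applied to $\psi_i$, which satisfies an equation coupled to the other components only through lower-order terms. The coupling terms are then absorbed by the large parameters $s,\lambda$.

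Finally, I combine the Carleman estimate with energy dissipation on $(T_1,T_2)$ to obtain the observability inequality, and conclude by HUM.
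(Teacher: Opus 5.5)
Your steps 1 and 2 (reduction to an interval $[T_1,T_2]$ where $\rank\calK(t)=n$, then the dual observability inequality) are fine. Step~3, which you yourself call the heart of the proof, has a genuine gap: it contains no working mechanism, and the tools you name cannot supply one.

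Write $\Phi$ for the adjoint state. The observed quantity $\psi_i=D_i(t)^*\Phi$ does not solve a closed scalar equation:
\[
-\pt\psi_i-\Delta\psi_i=\bigl(A(t)D_i(t)-D_i'(t)\bigr)^*\Phi .
\]
The right-hand side consists of exactly the unobserved components you are trying to estimate, and it is not small. So observability of the heat equation from an open set does not apply to $\psi_i$. The claim ``lower order, hence absorbed by $s,\lambda$'' is also unjustified. Suppose you run a Carleman estimate for $\psi_i$ with a weight $\eta_i$ adapted to $B_i$. Its source term can be absorbed by your global estimate (weight $\eta_0$, adapted to $B_0$) only if $e^{-2s\eta_i}\lesssim e^{-2s\eta_0}$ on all of $Q_T$. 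Recovering the $B_0$-local terms you need requires the reverse inequality on $(0,T)\times B_0$. For the standard weights this forces $\beta_i=\beta_0$ on $B_0$, hence a critical point of $\beta_i$ in $B_0$. When $B_0\cap B_i=\emptyset$, this is incompatible with $\beta_i$ serving for observation on $B_i$. Finally, ``interpolation using connectedness'' (propagation of smallness, three-ball inequalities) gives only H\"older- or logarithmic-type bounds, never the linear estimate that observability requires. Note also that Corollary~\ref{Distributed Kalman rank condition theorem} is \emph{not} obtained by moving all controls to a common set.

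The missing idea is the paper's: never transport observations, but change variables so that each control only has to see its own block. Set $\calK^{(i)}_0=D_i$ and $\calK^{(i)}_k=A\calK^{(i)}_{k-1}-\frac{d}{dt}\calK^{(i)}_{k-1}$. As in \cite[Lemma 4.1]{AKBDGB2}, select greedily chains $\calK^{(i_j)}_0,\dots,\calK^{(i_j)}_{s_j-1}$ that form an invertible $P(t)$ near $t_0$. Then $z=P(t)^{-1}y$ satisfies a system with the following structure:
\begin{itemize}
\item[(a)] $C=P^{-1}(AP-P')$ is block upper triangular;
\item[(b)] the diagonal blocks are of companion type, with subdiagonal entries equal to $1$;
\item[(c)] $P^{-1}D_{i_j}=e_{S_j}$, i.e.\ control $j$ acts only on the first equation of block $j$, on $\omega_{i_j}$.
\end{itemize}
Theorem~\ref{Carleman estimate for coupled linear parabolic systems of cascade type theorem} then uses one weight $\beta_j$ per block, adapted to that block's region alone, ordered $\beta_1<\dots<\beta_r$. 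Since $C^*$ is block lower triangular, the adjoint equations of block $j$ involve only blocks $\le j$. Every cross term carries $e^{-2s\eta_j}$, and so do the local terms of earlier components on $\omega_j$ produced by Lemma~\ref{bootstrap}. Because $\beta_j>\beta_{j'}$ for $j'<j$, this factor is dominated by $e^{-2s\eta_{j'}}$ times any negative power of $s$, so all these terms are absorbed by the left-hand side. It is this ordering of weights along the triangular structure that handles disjoint control regions. You mention ``the cascade structure'' in passing, but you neither construct it nor explain why it resolves the disjointness.

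Two further points. First, the triangularity must hold on a time interval, not only at $t_0$. Choose $t_0$ in the open set where $\rank\calK=n$, at a point where the ranks of the column families involved are locally constant. Second, once you have the cascade form, you could even bypass the new Carleman estimate: control the blocks successively in time with the single-control result of \cite{AKBDGB2}, as in Step~2 of the proof of Theorem~\ref{boundary Kalman condition theorem}.
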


This theorem will be proved thanks to a Carleman estimate for cascade systems, see Theorem \ref{Carleman estimate for coupled linear parabolic systems of cascade type theorem} below.

\section{The Kalman rank condition}

Recall that all along this section we assume that $N=1$ and the system considered is (\ref{syst2}). In fact, for the sake of simplicity of the notations we will consider
\begin{equation}\label{syst3}
\left\{
\begin{array}{ll}
&\pt y = \deltax y + Ay + D_1 u_1(t,x) 1_{\omega_1}(x) +\ldots+ D_{n_D} u_{n_D}(t,x) 1_{\omega_{n_D}}(x) \mbox{ in } Q_{T}, \\
\noalign{\smallskip}
&y(t,0)=B_1v_1(t), \quad y(t,1)=B_2v_2(t) \mbox{ on } (0,T). \\
\noalign{\smallskip}
\end{array}
\right.
\end{equation}

\subsection{Some known results}

Before starting the proof of Theorem \ref{boundary Kalman condition theorem} let us recall for convenience the following results:
\begin{proposition}\label{prop Hautus test}
Let be given $A \in \mat{n}$ and $B \in \mat{n \times m}$ ($n,m \in \mathbb{N}^*$). We have
$$\forall V \in \R^n, \quad \left(\forall t \geq 0, \quad B^*e^{tA^*}V=0 \right) \Longrightarrow V=0$$
if and only if
$$\rank{ \kalm{A}{B}{n} }=n$$
if and only if
\begin{equation}\label{recall Hautus test}
\ker (A^*- \theta I) \, \cap \, \ker B^*= \{0\}, \quad \forall \theta \in \mathbb{C}.
\end{equation}
\end{proposition}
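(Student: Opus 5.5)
I will prove the three conditions equivalent by a cycle through the Kalman rank condition. Write (a) for the unique continuation statement $B^*e^{tA^*}V=0$ for all $t\ge 0$ implies $V=0$, (b) for $\rank \kalm{A}{B}{n}=n$, and (c) for the Hautus condition (\ref{recall Hautus test}).

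\emph{Equivalence of (a) and (b).} Let $V\in\R^n$ and consider the analytic function $f(t)=B^*e^{tA^*}V$. Then $f\equiv 0$ on $[0,\infty)$ if and only if all its derivatives at $t=0$ vanish, that is, $B^*(A^*)^kV=0$ for every $k\ge 0$. By the Cayley--Hamilton theorem, $(A^*)^k$ for $k\ge n$ is a linear combination of $I,A^*,\ldots,(A^*)^{n-1}$. Hence $f\equiv 0$ if and only if $B^*(A^*)^kV=0$ for $k=0,\ldots,n-1$. This system says exactly that $\kalm{A}{B}{n}^*V=0$. Consequently (a) holds iff $\ker \kalm{A}{B}{n}^*=\{0\}$, which is iff $\rank \kalm{A}{B}{n}=n$.

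\emph{(b) implies (c).} Suppose $V\in\C^n$ is nonzero with $A^*V=\theta V$ and $B^*V=0$. Then $B^*(A^*)^kV=\theta^kB^*V=0$ for every $k$. So $V$ is a nonzero element of the kernel of $\kalm{A}{B}{n}^*$, now viewed over $\C$, which contradicts full rank. The rank is the same over $\R$ and over $\C$.

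\emph{(c) implies (b).} This is the step requiring an argument. Assume the rank is less than $n$ and set $W=\ker \kalm{A}{B}{n}^*\subset\C^n$, which is nonzero.
\begin{itemize}
\item $W$ is $A^*$-invariant. If $V\in W$, then $B^*(A^*)^k(A^*V)=B^*(A^*)^{k+1}V$ vanishes for $k\le n-2$ directly. For $k=n-1$ it vanishes by Cayley--Hamilton, since $(A^*)^n V$ is a combination of lower powers applied to $V$.
\item Since $W$ is a nonzero invariant subspace over $\C$, the restriction $A^*|_W$ has an eigenvector $V\in W$ with some eigenvalue $\theta$.
\item Every element of $W$ satisfies $B^*V=0$ (the case $k=0$).
\end{itemize}
Thus $V$ is a nonzero element of $\ker(A^*-\theta I)\cap\ker B^*$, contradicting (c).

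The main technical point is the invariance of $W$ together with the passage to complex scalars. Both are handled by Cayley--Hamilton and by the fact that a nonzero invariant subspace over $\C$ contains an eigenvector.
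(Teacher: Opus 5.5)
Your proof is correct and complete. It is the classical argument: analyticity of $t\mapsto B^*e^{tA^*}V$ together with Cayley--Hamilton gives the first equivalence, and the $A^*$-invariance of the complex subspace $\ker \kalm{A}{B}{n}^*$, which therefore contains an eigenvector of $A^*$, gives the Hautus test. You also correctly work in $\mathbb{C}^n$, which the Hautus condition needs in order to say anything at non-real $\theta$. The paper gives no proof of its own here and simply refers to Chapter 1 of Tucsnak--Weiss, where the standard treatment is of this kind.
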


Condition (\ref{recall Hautus test}) is the so-called \textit{Hautus test}. For a proof see for instance \cite[Chapter 1]{TW}.
To state the second result we need to define the \textit{adjoint system} of (\ref{syst3}):
\begin{equation}\label{adjoint system of syst}
\left\{
\begin{array}{ll}
&-\pt \Phi = \deltax \Phi + A^*\Phi \mbox{ in } Q_{T}, \\
&\Phi(t,0)=0, \quad \Phi(t,1)=0 \mbox{ on } (0,T).
\end{array}
\right.
\end{equation}

The introduction of system (\ref{adjoint system of syst}) is of interest thanks to the following proposition, which gives a characterization of the null-controllability of system (\ref{syst3}) through an inequality on its adjoint system (\ref{adjoint system of syst}):
\begin{proposition}[Observability inequality]\label{observability inequality proposition}
Let be given $T> 0$. System (\ref{syst3}) is null-controllable on $(0,T)$  if and only if there exists $C>0$ such that for every $\Phi^T \in H^1_0(\Omega;\R^n)$ the solution $\Phi$ to the adjoint system (\ref{adjoint system of syst}) with $\Phi(T)=\Phi^T$ satisfies
\begin{equation}\label{observability inequality}
\begin{array}{ll}
{||\Phi(0)||}_{H^1_0(\Omega;\R^n)}^{2} \leq C \bigg( &
\ds \sum_{i=1}^{n_D} \int_{0}^{T} \int_{\omega_i} \abs{D_i^*\Phi(t,x)}^2 \, dx \, dt \\
&\ds +  \int_{0}^{T} \abs{B_{1}^*\px \Phi(t,0) }^2    dt +  \int_{0}^{T} \abs{B_{2}^*\px \Phi(t,1) }^2 dt
\bigg),
\end{array}
\end{equation}
\end{proposition}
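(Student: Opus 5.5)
The plan is the standard duality (HUM) argument: null-controllability is equivalent to a range inclusion between bounded operators, and the observability inequality \eqref{observability inequality} is exactly that inclusion expressed through the adjoint operators. The one non-routine point is the functional setting. Boundary controls force the state to live in $H^{-1}(\Omega;\R^n)$, so the dual pairing is $H^{-1}$--$H^1_0$, which is why $H^1_0$ norms appear in \eqref{observability inequality}.

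\textbf{Step 1: transposition identity.} For $\Phi^T \in H^1_0(\Omega;\R^n)$, let $\Phi$ solve \eqref{adjoint system of syst} with $\Phi(T)=\Phi^T$. I would derive the identity that defines the transposition solution of \eqref{syst3}, by formally multiplying the state equation by $\Phi$ and integrating by parts in $(t,x)$:
\begin{equation*}
\langle y(T),\Phi^T\rangle_{H^{-1},H^1_0} - \langle y_0,\Phi(0)\rangle_{H^{-1},H^1_0}
= \sum_{i=1}^{n_D}\int_0^T\!\!\int_{\omega_i} u_i\, D_i^*\Phi \,dx\,dt
- \int_0^T v_2\, B_2^*\px\Phi(t,1)\,dt + \int_0^T v_1\, B_1^*\px\Phi(t,0)\,dt .
\end{equation*}
To justify it rigorously, I would first take smooth data and then pass to the limit using the continuity of the solution map from the Appendix of \cite{FCGBdT}. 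The normal derivatives are well defined in $L^2(0,T)$ by the hidden regularity of $H^1_0$ solutions of the heat equation.

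\textbf{Step 2: operator formulation.} I would define bounded operators $L_T : (u,v)\mapsto y(T)$ with $y_0=0$, and $S_T : y_0 \mapsto y(T)$ with zero controls, the latter viewed on $H^{-1}(\Omega;\R^n)$. By linearity, null-controllability from every $y_0\in L^2$ means $\operatorname{Range} S_T|_{L^2} \subset \operatorname{Range} L_T$.

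Here I need to reconcile $L^2$ and $H^{-1}$ initial data. For $H^{-1}$ data, I would first let the free system run on $(0,T/2)$; parabolic smoothing makes the state $L^2$ (even smooth) at time $T/2$, and then I control on $(T/2,T)$. So controllability for $L^2$ data and for $H^{-1}$ data are equivalent. The time-splitting costs a modified constant but changes nothing structurally. Alternatively, I would adopt the convention that the $H^{-1}$ setting is the natural one, which makes the $H^1_0$ norm on $\Phi(0)$ appear.

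\textbf{Step 3: duality lemma.} The classical lemma for Hilbert spaces (Douglas / Dolecki--Russell) says that $\operatorname{Range} S_T\subset\operatorname{Range} L_T$ if and only if there is $C>0$ with $\|S_T^*\Phi^T\|\le C\|L_T^*\Phi^T\|$ for all $\Phi^T$. By Step 1, $S_T^*\Phi^T=\Phi(0)$ (up to the Riesz isomorphism $H^{-1}\simeq (H^1_0)'$), and $L_T^*\Phi^T$ is the triple
\begin{equation*}
\left( (1_{\omega_i}D_i^*\Phi)_i,\; B_1^*\px\Phi(\cdot,0),\; -B_2^*\px\Phi(\cdot,1) \right).
\end{equation*}
Squaring the inequality gives exactly \eqref{observability inequality}.

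For the necessity direction, the closed graph theorem gives a bounded control map $y_0\mapsto(u,v)$, and testing the identity of Step 1 against $\Phi^T$ yields the inequality.

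\textbf{Main obstacle.} I expect the main difficulty to be the bookkeeping of spaces: making the transposition identity rigorous, including the $L^2(0,T)$ regularity of $\px\Phi$ at $x=0,1$, and matching the $L^2$ versus $H^{-1}$ initial data, which I handle by the smoothing argument of Step 2. The duality itself is routine.
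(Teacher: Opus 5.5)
Your core argument (the transposition identity, then the Douglas/Dolecki--Russell range-inclusion lemma for $S_T$ and $L_T$) is the standard duality proof. It is what the paper relies on: the paper gives no proof, only a reference to the Appendix of \cite{FCGBdT}, where this duality is carried out with initial data in $H^{-1}(\Omega;\R^n)$. That choice of data space is why the $H^1_0$ norm of $\Phi(0)$ appears. Your signs in the identity of Step 1 and your computation of $L_T^*\Phi^T$ are correct. In the $H^{-1}$ setting, Steps 1 and 3 form a complete proof.

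The step that fails is the reconciliation of $L^2$ and $H^{-1}$ data in Step 2. Running the free system on $(0,T/2)$ and then controlling on $(T/2,T)$ requires null-controllability on an interval of length $T/2$. The hypothesis only gives it on intervals of length $T$, and on longer ones by extending the controls by zero; nothing is known for shorter intervals at this stage. From null-controllability on $(0,T)$ for every $y_0\in L^2(\Omega;\R^n)$, the duality lemma (or your closed-graph argument) actually yields the inequality with $\|\Phi(0)\|_{L^2(\Omega;\R^n)}$ on the left. The norm on $\Phi(0)$ is always the dual norm of the space of initial data. Your smoothing argument upgrades this to the $H^1_0$ norm only on $(0,T')$ with $T'>T$.

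So for fixed $T$, your ``alternative convention'' (null-controllability for $H^{-1}$ data) has to be the main line, not a fallback. The equivalence between $L^2$ and $H^{-1}$ data should be claimed only for the notion ``null-controllable for every $T>0$'', where your smoothing argument is valid. This does not hurt the paper. Theorem~\ref{boundary Kalman condition theorem} uses the every-$T$ notion. Step 4 of its proof needs only the weaker $L^2$ form of the inequality, since it exhibits a $\Phi$ with $\Phi(0)\neq 0$ and vanishing observation.
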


For a proof see for instance \cite[Appendix]{FCGBdT}. Inequality (\ref{observability inequality}) is called \textit{observability inequality}.

\subsection{Proof of Theorem \ref{boundary Kalman condition theorem}}
The key point of the proof is to do an appropriate change of basis thanks to the hypothesis (\ref{boundary Kalman condition}). In this new basis, the matrix $A$ becomes a block upper triangular matrix $C$; and $B_1$, $B_2$ and $D$ become such that one control is acting on each diagonal block of $C$.
Note that this technique, firstly used in \cite{AKBDGB}, is very specific to the fact that the coefficients before the operator $-\deltax$ are the same on every single equation.
In a second time we will check that every diagonal block of $C$ satisfies the appropriate Kalman condition (boundary or distributed). And as a consequence, taking also advantage of the fact that the last block of $C$ is decoupled from the upper ones, we can start to control the last block in a time before $T$ and lead to zero at this time the components associated to this block; this allows us to iterate the process for the remaining blocks and finally lead every component to zero at time $T$.

\begin{proof}[Proof]
\textbf{Step 1}
Under the condition (\ref{boundary Kalman condition}) we start to construct a basis in which the matrices $A$, $D$ and $B_1$, $B_2$ has the desired structure. We have
\begin{lemma}\label{lemma for the construction of a basis}
Assume that condition (\ref{boundary Kalman condition}) holds. Then, there exists $r_D \in \{0,\ldots, n_D \}$, $D_{i_1}, \ldots, D_{i_{r_D}} \in \{D_k\}_{1 \leq k \leq n_D}$ and $s_1, \ldots, s_{r_D} \in \{1,\ldots, n \}$ such that for every $q \in \mathbb{N}^*$ there exists $r_B \in \{0,1, 2 \}$, $\notb{B_{j_1}}{}, \ldots, \notb{ B_{j_{r_B}} }{} \in \left\{ \notc{B_1}{q} ,\nota{ B_2}{q} \right\}$ and $\tilde{s}_1,\ldots \tilde{s}_{r_B} \in \{1,\ldots, nq \}$, such that
$$P_q=\left[ \ P^D_q \ \middle| \ P^B_q \ \right] \in \mat{nq}$$
is invertible, where we have denoted
$$
P^D_q=\notd{ \left[ \kalm{A}{D_{i_1}}{s_1} \middle| \cdots  \middle| \kalm{A}{D_{i_{r_D}}}{s_{r_D}}\right] }{q}
\mbox{ and }
P^B_q =\left[ \kalm{\calA_q}{\notb{B_{j_1}}{}}{\tilde{s}_1} \middle| \cdots \middle| \kalm{\calA_q}{\notb{B_{j_{r_B}}}{}}{\tilde{s}_{r_B}}  \right].
$$
Moreover for every $k \in \{1, \ldots, r_D\}$,
$$A^{s_k}D_{i_k} \in \vect \bigg[ \kalm{A}{D_{i_1}}{s_1} \bigg| \cdots  \bigg| \kalm{A}{D_{i_{k}}}{s_{k}}\bigg] .$$
\end{lemma}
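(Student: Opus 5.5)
We build the basis greedily: first the distributed part, which is independent of $q$, and then, for each fixed $q$, we complete it with the boundary part.

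\textbf{Distributed part.} We run the standard Brunovsky-type construction for the pair $(A,D)$. Let $s_1\ge 0$ be the largest integer such that $D_1, AD_1,\ldots,A^{s_1-1}D_1$ are linearly independent, and let $E_1=\vect \kalm{A}{D_1}{s_1}$. This space is $A$-invariant, and $A^{s_1}D_1\in E_1$. Next we pass to the first $D_k$ with $D_k\notin E_1$ (if there is one). We take $s_2$ maximal such that the columns of $\kalm{A}{D_1}{s_1}$ and $D_k,\ldots,A^{s_2-1}D_k$ are linearly independent, and set $E_2=E_1+\vect\kalm{A}{D_k}{s_2}$.

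By maximality of $s_2$ we get $A^{s_2}D_k\in E_2$. Since $E_1$ is $A$-invariant, $E_2$ is $A$-invariant as well. Iterating, we stop when $D_k\in E_{r_D}$ for all $k$, which gives $r_D\le n_D$ and indices $i_1,\ldots,i_{r_D}$. The ``Moreover'' statement holds by construction. The final space is $E:=E_{r_D}$, which is $A$-invariant and contains every $D_k$, so $E=\vect\kalm{A}{D}{n}$. (If $D=0$, then $r_D=0$.) Hence $P^D_q$ has independent columns spanning $\vect\notd{\kalm{A}{D}{n}}{q}=E^q\subset\R^{nq}$.

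\textbf{Boundary part for fixed $q$.} The key observation is that $E^q$ is $\calA_q$-invariant, since each block $A_k=-\lambda_k I+A$ preserves $E$. We repeat the greedy procedure with the matrix $\calA_q$ and the two vectors $\notc{B_1}{q}$, $\nota{B_2}{q}$, starting from the subspace $E^q$ instead of $\{0\}$.

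If $\notc{B_1}{q}\notin E^q$, we take $\tilde s_1$ maximal such that the columns of $P^D_q$ together with $\notc{B_1}{q},\ldots,\calA_q^{\tilde s_1-1}\notc{B_1}{q}$ are linearly independent. The resulting span $F_1$ is $\calA_q$-invariant by maximality and the invariance of $E^q$. We then do the same with $\nota{B_2}{q}$ if it is not in $F_1$. This yields $r_B\le 2$, the choice of $\notb{B_{j_l}}{}$, and an $\calA_q$-invariant space $F$ with independent spanning family $[P^D_q|P^B_q]$. By construction $F$ contains $E^q$ and both boundary vectors.

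\textbf{Conclusion via the rank condition, and the main obstacle.} It remains to see that $F=\R^{nq}$, which gives invertibility of $P_q$. Since $F$ is $\calA_q$-invariant and contains $\notc{B_1}{q}$ and $\nota{B_2}{q}$, it contains the column space of $\kalm{\calA_q}{(\notc{B_1}{q}|\nota{B_2}{q})}{nq}$. It also contains $E^q$, the column space of $\notd{\kalm{A}{D}{n}}{q}$. By (\ref{boundary Kalman condition}) these together span $\R^{nq}$, so $F=\R^{nq}$ and $P_q$ is invertible.

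The only delicate point is the invariance bookkeeping. One must make sure that maximality of each $s_k$ (resp.\ $\tilde s_l$) forces the next power to fall in the accumulated span. This uses that the previously accumulated space is already invariant, by an induction on the number of blocks. One must also ensure that the distributed choice is made once and for all, independently of $q$, which is why the $D$-part is built in $\R^n$ and then duplicated blockwise by $\notd{\cdot}{q}$. All the other steps are routine linear algebra.
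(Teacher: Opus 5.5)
Your proof is correct and follows the same greedy construction as the paper. The distributed Krylov blocks are chosen once in $\R^n$ and duplicated blockwise. For each $q$ they are completed by Krylov blocks of $\notc{B_1}{q}$ and $\nota{B_2}{q}$, and condition (\ref{boundary Kalman condition}) forces the completion to fill $\R^{nq}$. Your explicit $\calA_q$-invariance of $E^q$ and of the accumulated spans is exactly what the paper's ``one can check'' steps rely on.

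One cosmetic fix: start the distributed procedure at the first nonzero $D_k$ rather than at $D_1$, so that every $s_k \geq 1$ as the statement requires.
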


\begin{proof}[Proof]
\textbf{Step 1}
We assume that $D \neq 0$ otherwise the result stated by Theorem \ref{boundary Kalman condition theorem} is already known (see \cite{AKBGBdT}). Thus there exists $D_{i_1} \in \{D_k\}_{1 \leq k \leq n_D}$ such that $D_{i_1} \neq 0$. We set
$$s_1= \rank{ \left( D_{i_1} , A D_{i_1} , \ldots , A^{n-1} D_{i_1} \right) }$$
so that $ \rank{ \left( D_{i_1} , A D_{i_1} , \ldots, A^{s_1-1} D_{i_1} \right) } =s_1$. If $s_1=n$ then the proof ends here by taking $P_q= \notd{ \left[ D_{i_1} | A D_{i_1} | \ldots | A^{s_1-1} D_{i_1} \right] }{q}$.
If $s_1 < n$ we check if there exists $D_{i_2} \in \{D_k\}_{1 \leq k \leq n_D} \backslash \, \{ D_{i_1} \}$ such that $(D_{i_1}, A D_{i_1}, \ldots , A^{s_1-1} D_{i_1}, D_{i_2})$ is linearly independent. If this is not the case we go to step 2. But if such a $D_{i_2}$ exists we set
$$s_2=\rank{ \left( D_{i_1} , A D_{i_1} , \ldots, A^{s_1-1} D_{i_1}, D_{i_2} , AD_{i_2}, \ldots , A^{n-1}D_{i_2}  \right) }  -s_1$$
so that  $\rank{ \left( D_{i_1} , A D_{i_1} , \ldots, A^{s_1-1} D_{i_1}, D_{i_2} , AD_{i_2}, \ldots , A^{s_2-1}D_{i_2}  \right) } =s_1+s_2$. If $s_1+s_2=n$ the proof ends. If $s_1 + s_2 <n$ then we continue the previous process. This stops when we have found a rank $r_D \in \{1,\ldots, n\}$, $i_1, \ldots, i_{r_D} \in \{1, \ldots, n_D\}$ and $s_1, \ldots, s_{r_D} \in \{1,\ldots, n \}$ such that
\begin{equation}\label{independent family}
\left( D_{i_1} ,A D_{i_1} , \ldots , A^{s_1-1} D_{i_1} , D_{i_2} , AD_{i_2}, \ldots , A^{s_2-1}D_{i_2}, \ldots , A^{s_{r_D}-1} D_{i_{r_D}} \right)
\end{equation}
is linearly independent and such that every element of $\{D_k\}_{1 \leq k \leq n_D} \backslash \, \{ D_{i_1}, \ldots , D_{i_{r_D}} \}$  belongs to the space spanned by the family (\ref{independent family}). As said before if $\sum_{k=1}^{r_D} s_k=n$ the proof ends (and let us remark that in this case system (\ref{syst3}) is null-controllable with distributed controls alone).
If this is not the case:

\textbf{Step 2}
Thanks to condition (\ref{boundary Kalman condition}) there exists $\notb{B_{j_1}}{} \in \left\{ \notc{B_1}{q} , \nota{B_2}{q} \right\}$ and $\hat{s} \in \{1,\ldots, nq\}$ such that
$$\left( \notd{ D_{i_1} ,A D_{i_1} , \ldots , A^{s_1-1} D_{i_1} , D_{i_2} , AD_{i_2}, \ldots , A^{s_2-1}D_{i_2}, \ldots , A^{s_{r_D}-1} D_{i_{r_D}} }{q} , \calA_q^{\hat{s}-1} \notb{B_{j_1} }{}  \right)$$
is linearly independent. One can check that this necessary implies that the familly
$$\left(  \notc{ D_{i_1} ,A D_{i_1} , \ldots , A^{s_1-1} D_{i_1} , D_{i_2} , AD_{i_2}, \ldots , A^{s_2-1}D_{i_2}, \ldots , A^{s_{r_D}-1} D_{i_{r_D}} }{q} , \notb{B_{j_1} }{}  \right)$$
is also linearly independent. We set
$$
\begin{array}{lr}
\tilde{s}_1= & \rank{ \left( \notd{ D_{i_1} , \ldots , A^{s_1-1} D_{i_1} , \ldots , D_{i_{r_D}}, \ldots, A^{s_{r_D}-1} D_{i_{r_D}} }{q} , \notb{B_{j_1} }{}, \ldots, \calA_q^{nq-1} \notb{B_{j_1} }{} \right) } \\
& \ds - \sum_{k=1}^{r_D} s_kq.
\end{array}$$
If $\tilde{s}_1+ \sum_{k=1}^{r_D} s_kq=nq$ we have done. If this is not the case, thanks to condition (\ref{boundary Kalman condition}) we can find $\notb{B_{j_2}}{} \in \left\{ \notc{B_1}{q}, \nota{B_2}{q} \right\} \backslash \left\{ \notb{B_{j_1}}{} \right\}$ such that
$$\left( \notd{ D_{i_1} , \ldots , A^{s_1-1} D_{i_1} , \ldots , D_{i_{r_D}}, \ldots, A^{s_{r_D}-1} D_{i_{r_D}} }{q} , \notb{B_{j_1} }{}, \ldots, \calA_q^{\tilde{s}_1-1} \notb{B_{j_1} }{}, \notb{B_{j_2} }{} \right)$$
is linearly independent. We iterate the same process and finally obtain the result.
\end{proof}

We apply Lemma \ref{lemma for the construction of a basis} and for the sake of simplicity of the notations we will treat one case, that is $r_B=2$ and $\notb{B_{j_1}}{}=\notc{B_1}{q}$, $\notb{B_{j_2}}{}=\nota{B_2}{q}$. For $q=1$ we obtain that $P_1 \in \mat{n}$ is invertible and
$$
P_1^{-1}AP_1=C=\left[\begin{array}{ccccc}
C_1 & \times & \cdots & \cdots & \times  \\
0 & \ddots & \ddots & & \vdots \\
\vdots & \ddots & \ddots & \ddots & \vdots \\
\vdots &  & \ddots & C_{r_D} & \times  \\
0 & \cdots &  \cdots & 0 & K
\end{array} \right]
\mbox{ where }
C_{i}=\left[\begin{array}{ccccc}
0 & 0 & \cdots & 0 &\times \\
1 & \ddots &   & \vdots & \vdots \\
0 &1 & \ddots & \vdots & \vdots \\
\vdots & \ddots & \ddots & 0 & \vdots \\
0 & \cdots & 0 & 1 & \times
\end{array}\right]
\in \mat{s_i}
$$
and $K \in \mat{ \tilde{s}_{B} }$ with $\tilde{s}_{B}= \sum_{k=1}^{2} \tilde{s}_k$.
Moreover for every $l \in \{1 , \ldots , r_D\}$ we have
$$P_1e_{S_l}=D_{i_l} \mbox{ and } P_1e_{\tilde{S}_1}=B_{1}, \quad P_2e_{\tilde{S}_2}=-B_{2}$$
where we denote $S_k =1+ \sum_{r=1}^{k-1} s_r$, $\tilde{S}_l = S_{r_D+1}+ \sum_{r=1}^{l-1} \tilde{s}_r$ and the vector $e_j$ denotes the real vector of $\R^n$ with $1$ on its $j$-th component and $0$ elsewhere.

Let us now remark that if the system
\begin{equation}\label{effective system}
\left\{
\begin{array}{ll}
&\pt z = \deltax z + Cz + e_{S_1}\hat{u}_{i_1}(t,x) 1_{\omega_{i_1}}(x) +\ldots+e_{S_{r_D}}\hat{u}_{i_{r_D}}(t,x) 1_{\omega_{i_{r_D}}}(x) \mbox{ in } Q_{T}, \\
&z(t,0)=e_{\tilde{S}_1}\hat{v}_{1}(t), \quad z(t,1)=-e_{\tilde{S}_{2}}\hat{v}_{2}(t) \mbox{ on } (0,T),
\end{array}
\right.
\end{equation}
is null-controllable, then system (\ref{syst3}) is also null-controllable by doing the change of variables $z=P_1^{-1}y$ and then taking for all $l \in \{1 , \ldots , n_D\}$ $u_{l}=\hat{u}_{i_k}$ if there exists $k \in \{1,\ldots, r_D \}$ such that $l=i_k$, $u_l=0$ otherwise, and taking for all $l \in \{1 , 2\}$ $v_{l}=\hat{v}_{l} $.
So let us now prove that the system (\ref{effective system}) is null-controllable:

\textbf{Step 2}
We rewrite the solution $z$ of system (\ref{effective system}) as follow:
$$z=\left[\begin{array}{c} z_1 \\ \vdots \\ z_{r_D} \\ z_B\end{array}\right]$$
where $z_B \in \R^{\tilde{s}_B}$ and $z_i \in \R^{s_i}$ for all $i \in \{1, \ldots, r_D\}$.

Now we look at the system satisfied by $z_B$ and observe that it is independent of $z_1, \ldots z_{r_D}$:
\begin{equation}\label{system satisfied by z_{r_D}}
\left\{
\begin{array}{ll}
&\pt z_B = \deltax z_B + Kz_B  \mbox{ in } Q_{T}, \\
&z_B(t,0)=e_{1}\hat{v}_{1}(t), \quad z_B(t,1)=-e_{\tilde{s}_{1}}\hat{v}_{2}(t)  \mbox{ on } (0,T).
\end{array}
\right.
\end{equation}
Assume for the moment that $K$ satisfies the boundary Kalman condition
\begin{equation}\label{boundary Kalman condition for the boundary block}
\rank{ \kalm{\calK_q}{ \left(\notc{e_{1}}{q} \middle| \nota{ -e_{\tilde{s}_1} }{q} \right) }{\tilde{s}_Bq} }= \tilde{s}_Bq, \quad \forall q \in \mathbb{N}^*,
\end{equation}
where we recall that
$$
\calK_q=\left[\begin{array}{ccccc}
K_1 & 0 & \cdots & \cdots & 0 \\
0 & K_2 & \ddots & & \vdots \\
\vdots & \ddots & \ddots & \ddots & \vdots \\
\vdots &  & \ddots & \ddots &0 \\
0 & \cdots &  \cdots & 0 & K_q
\end{array} \right]
\in \mat{\tilde{s}_Bq}
\mbox{ and }
K_k=-\lambda_k I +K
\in \mat{\tilde{s}_B}.
$$

Then, we deduce that the system (\ref{system satisfied by z_{r_D}}) is null-controllable. In particular, let a time $T_{B} \in (0,T)$ be given, then there exist controls $\hat{\hat{v}}_{1}, \hat{\hat{v}}_{2} \in L^2(0,T_{B})$ such that $z_B(T_{B})=0$ in $\Omega$. We choose
$$
\hat{v}_{1}(t)= \left\{ \begin{array}{ll} & \hat{\hat{v}}_{1}(t) \mbox{ if } t \in (0,T_B), \\ & 0 \mbox{ otherwise. } \end{array}\right.
\quad
\hat{v}_{2}(t)= \left\{ \begin{array}{ll} & \hat{\hat{v}}_{2}(t) \mbox{ if } t \in (0,T_B), \\ & 0 \mbox{ otherwise. } \end{array}\right.
$$
as controls, and one can see that $z_{B}(t)=0$ in $\Omega$ for all $t \geq T_{B}$.
As a consequence $\hat{z}$ defined by
$$\hat{z}=\left[\begin{array}{c}  z_1 \\ \vdots \\ z_{r_D} \end{array}\right]$$
satisfies
$$
\left\{
\begin{array}{ll}
&\pt \hat{z} = \deltax \hat{z} + \hat{C}\hat{z} + e_{S_1}\hat{u}_{i_1}(t,x) 1_{\omega_{i_1}}(x) +\ldots+e_{S_{r_D}}\hat{u}_{i_{r_D}}(t,x) 1_{\omega_{i_{r_D}}}(x) \mbox{ in } Q_{(T_{B}, T)}, \\
&\hat{z}(t,0)=0, \quad \hat{z}(t,1)=0 \mbox{ on } (T_{B},T),
\end{array}
\right.
$$
with
$$
\hat{C}=\left[\begin{array}{ccccc}
C_1 &  \times  & \cdots & \cdots &  \times  \\
0 & \ddots & \ddots & & \vdots \\
\vdots & \ddots & \ddots & \ddots & \vdots \\
\vdots &  & \ddots & \ddots &  \times  \\
0 & \cdots &  \cdots & 0 & C_{r_D}
\end{array} \right].
$$
And since for all $i \in \{1, \ldots, r_D \}$ the distributed Kalman condition $\rank{ \kalm{C_i}{e_i}{s_i} }=s_i$ is satisfied we can iterate this process and this will lead the result.

As a consequence it remains to prove that the condition (\ref{boundary Kalman condition for the boundary block}) is satisfied:

\textbf{Step 3}
In fact, condition (\ref{boundary Kalman condition for the boundary block}) holds if and only if for all $q \in \mathbb{N}^*$ the Hautus test holds (see Proposition \ref{prop Hautus test}):
\begin{equation}\label{Hautus test}
\ker \left( {\calK}_{q}^{*} - \theta I \right)
\, \cap \, \ker {  \left(\notc{e_{1}}{q} \middle| \nota{ -e_{\tilde{s}_1} }{q} \right) }^{*}
= \{0 \}, \quad \forall \theta \in \C.
\end{equation}

To prove (\ref{Hautus test}) we will use condition (\ref{boundary Kalman condition}), and to this aim let us then first reformulate (\ref{Hautus test}) in terms of the original data of the problem, that is $A$, $D$, $B_1$ and $B_2$. This is done through the following lemma:

\begin{lemma}\label{equivalences}
Assume that condition (\ref{boundary Kalman condition}) holds. We have the following equivalences:
\begin{enumerate}
\item\label{item 1} For all $q \in \mathbb{N}^*$ the Hautus test (\ref{Hautus test}) holds.

\item\label{item 2} For all $q \in \mathbb{N}^*$, for all $\theta \in \C$, for all $s \in \mathbb{N}^*$ and for all $V^1, \ldots , V^s \in \R^{\tilde{s}_Bq}$ linearly independent vectors of $\ker \left( \calK_q^* - \theta I \right)$, the set
$$\left\{ \quad {  \left(\notc{e_{1}}{q} \middle| \nota{ -e_{\tilde{s}_1} }{q} \right) }^* V^k \quad \right\}_{1 \leq k \leq s}$$
is linearly independent in $\R^{2q}$.

\item\label{item 3} For all $q \in \mathbb{N}^*$, for all $\theta \in \C$, for all $s \in \mathbb{N}^*$ and for all $W^1, \ldots , W^s \in \R^{nq}$ linearly independent vectors of $\ker \left( \calA_q^* - \theta I \right) \, \cap \, \ker \notd{D_{i_1} | \cdots | D_{i_{r_D}} }{q}^*$, the set
$$
{\left\{ \quad { \left( \notb{B_{j_1}}{} \middle| \notb{B_{j_2}}{} \right) }^{*} W^k \quad \right\} }_{1 \leq k \leq s}
$$
is linearly independent in $\R^{2q}$.
\end{enumerate}

And those conditions are implied by the following one: for all $q \in \mathbb{N}^*$ we have
\begin{equation}\label{an equivalent to the Hautus test}
\ker \left( {\calA}_{q}^{*} - \theta I \right)
\, \cap \, \ker \notd{D_{i_1} | \cdots | D_{i_{r_D}} }{q}^*
\, \cap \, \ker { \left( \notb{B_{j_1}}{} \middle| \notb{B_{j_2}}{} \right) }^{*}
= \{0 \}, \quad \forall \theta \in \C.
\end{equation}

\end{lemma}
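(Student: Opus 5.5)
We prove the chain of equivalences \ref{item 1} $\Leftrightarrow$ \ref{item 2} $\Leftrightarrow$ \ref{item 3}, and then show that (\ref{an equivalent to the Hautus test}) implies \ref{item 3}. The main tool is the block-diagonal change of basis $\calP_q := \notd{P_1}{q}$.

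\textbf{\ref{item 1} $\Leftrightarrow$ \ref{item 2}.} This is pure linear algebra. Fix $q,\theta$ and write $\mathcal{E}_q=\left(\notc{e_{1}}{q} \middle| \nota{ -e_{\tilde{s}_1} }{q} \right)$. If the Hautus test (\ref{Hautus test}) holds, then $\mathcal{E}_q^*$ restricted to $\ker(\calK_q^*-\theta I)$ is injective. Hence it maps linearly independent vectors to linearly independent vectors, which is \ref{item 2}. Conversely, take $s=1$ in \ref{item 2}: any nonzero $V$ in the kernel satisfies $\mathcal{E}_q^*V\neq 0$, which is (\ref{Hautus test}). Since everything is linear, complex $\theta$ causes no difficulty: we work in $\C^{\tilde s_B q}$ throughout.

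\textbf{\ref{item 2} $\Leftrightarrow$ \ref{item 3}.} Set $\calP_q=\notd{P_1}{q}$. Because $P_1^{-1}AP_1=C$ and $\calA_q$ is block diagonal with blocks $A-\lambda_k I$, we get $\calP_q^{-1}\calA_q\calP_q=\calC_q:=\notd{\cdot}{}$-diag$(C-\lambda_kI)$. Thus $W\mapsto \calP_q^*W$ is a bijection from $\ker(\calA_q^*-\theta I)$ onto $\ker(\calC_q^*-\theta I)$. We then use the block structure of each $C^*-\lambda_k I$, which is block lower triangular with the block $K^*-\lambda_kI$ in the last position.

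Write $X=\calP_q^*W=(X_D,X_B)$ blockwise in each of the $q$ copies. The condition $W\in\ker \notd{D_{i_1}|\cdots|D_{i_{r_D}}}{q}^*$ becomes $X_{S_l}=0$ for every $l$, since $P_1e_{S_l}=D_{i_l}$. Combining this with the eigen-equation for the cyclic companion blocks $C_l^*$ (whose transposes are shift operators) forces $X_D=0$ block by block. Concretely, $C_l^*$ acting on $x$ with first coordinate $0$ and $x$ an eigenvector propagates zeros, and the off-diagonal $\times$ terms feed down from already-zero blocks. The argument is an induction on $l=1,\dots,r_D$, using that the coupling in $C^*$ goes from earlier blocks to later ones.

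Hence $\ker(\calA_q^*-\theta I)\cap\ker\notd{D}{q}^*$ corresponds exactly to the vectors $X=(0,X_B)$ with $X_B\in\ker(\calK_q^*-\theta I)$. Conversely, any such $X_B$ padded with zeros is an eigenvector of $\calC_q^*$, because the last block row of $C^*$ is $(\times,\dots,\times,K^*)$ acting on zero $D$-components. Finally, $\left(\notb{B_{j_1}}{}|\notb{B_{j_2}}{}\right)^*W=\mathcal{E}_q^*X_B$, by the identities $P_1e_{\tilde S_1}=B_1$ and $P_1e_{\tilde S_2}=-B_2$, up to the chosen sign convention for $\nota{\cdot}{q}$. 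Since this bijection preserves linear independence, \ref{item 2} and \ref{item 3} are equivalent.

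\textbf{(\ref{an equivalent to the Hautus test}) $\Rightarrow$ \ref{item 3}.} Under (\ref{an equivalent to the Hautus test}), the map $\left(\notb{B_{j_1}}{}|\notb{B_{j_2}}{}\right)^*$ is injective on $\ker(\calA_q^*-\theta I)\cap\ker \notd{D_{i_1}|\cdots}{q}^*$, which is exactly \ref{item 3}.

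The step we expect to be the main obstacle is the inductive vanishing of $X_D$. It requires carefully tracking the transposed companion structure and the triangular coupling. In particular, the propagation must start from the zero coordinate $S_l$ and cover all $s_l$ coordinates of each block, using $\theta$-eigen-equations in which the last column's $\times$ entries appear. If the direct propagation fails, we would instead use the invariance $A\,\vect P^D\subset\vect P^D$ given by the last assertion of Lemma \ref{lemma for the construction of a basis}. Any $W$ in the intersection annihilates $\notd{D_{i_l}}{q}$ and is $\calA_q^*$-invariant, so it annihilates $\notd{A^jD_{i_l}}{q}$ for all $j$, that is, all columns of $P^D_q$. This gives $X_D=0$ directly and avoids the coordinate computation.
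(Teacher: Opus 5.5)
Your proposal is correct and is essentially the paper's proof: the paper also changes basis blockwise by $P_1$ (its map $V\mapsto \big((P_1^*)^{-1}[0;V_l]\big)_{1\le l\le q}$ is the inverse of your blockwise $W\mapsto P_1^*W$), identifies $\ker(\calA_q^*-\theta I)\cap\ker \notd{D_{i_1}|\cdots|D_{i_{r_D}}}{q}^*$ with the zero-padded elements of $\ker(\calK_q^*-\theta I)$, and matches the boundary observations through $P_1e_{\tilde S_1}=B_1$, $P_1e_{\tilde S_2}=-B_2$. The differences are minor: you prove item 1 $\Leftrightarrow$ item 2 directly by injectivity instead of citing \cite{AKBGBdT}, and you justify the vanishing of the $D$-components explicitly (via the shift structure of $C_l^*$, or via $D_{i_l}^*W_k=0\Rightarrow (A^jD_{i_l})^*W_k=0$ for eigenvectors $W_k$ of $A^*$), which is the step the paper only asserts with ``one can check that $E=\ker(P^D_q)^*$''.
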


\begin{proof}[Proof]
One can see that item \ref{item 1} is equivalent to item \ref{item 2} (see for instance \cite[Proposition 3.1]{AKBGBdT}) and that condition (\ref{an equivalent to the Hautus test}) implies item \ref{item 3}. Thus let us prove that item \ref{item 2} and item \ref{item 3} are equivalent. Let us fix $q \in \mathbb{N}^*$ and $\theta \in \C$. We define a bijective linear map $\Phi$ by:
$$
\begin{array}{ccccc}
\Phi & : & \ker \left( \calK_q^*-\theta I \right) & \longrightarrow & \ker \left( \calA_q^*- \theta I \right) \, \cap \, E \\
       &    & V=\left[\begin{array}{c} V_1 \\ \vdots \\ V_q \end{array}\right]& \longmapsto & \left[\begin{array}{c} \tilde{\Phi}(V_1) \\ \vdots \\ \tilde{\Phi}(V_q) \end{array}\right]
\end{array}
$$
where
$$\tilde{\Phi}(V_l)={(P_1^*)}^{-1} \left[\begin{array}{c} 0 \\ V_l \end{array}\right] \in \R^n,$$
and
$$E =\left\{ 
W = \left[\begin{array}{c} W_1 \\ \vdots \\ W_q \end{array}\right] \in \R^{nq} 
\mbox{ such that }
\forall l \in \{1, \ldots, q \}, \quad
W_l=(P_1^*)^{-1} \left[ \begin{array}{c} 0 \\ \times \end{array} \right]
\right\}.$$
One can check that in fact $E=\ker (P^D_q)^*$ and thus $\ker \left( \calA_q^*- \theta I \right) \, \cap \, E=\ker \left( \calA_q^* - \theta I \right) \, \cap \, \ker \notd{D_{i_1} | \cdots | D_{i_{r_D}} }{q}^*$.
Moreover, for all $s \in \mathbb{N}^*$ and all $\{\alpha_k\}_{1 \leq k \leq s} \subset \R$ we have
$$
\begin{array}{ll}
\ds \sum_{k=1}^{s} \alpha_k { \left( \notb{B_{j_1}}{} \middle| \notb{B_{j_2}}{} \right) }^* \Phi(V^k)
&\ds =\sum_{k=1}^{s} \alpha_k \left( \sum_{l=1}^{q} {\notc{B_{1} \middle| {(-1)}^{l} B_{2} }{}}^* \Phi_l(V^k_l) \right) \\
&\ds =\sum_{k=1}^{s} \alpha_k \left( \sum_{l=1}^{q} \left(e_{\tilde{S}_1} \middle| {(-1)}^{l+1} e_{\tilde{S}_2} \right)^* P_1^* \Phi_l(V^k_l) \right) \\
&\ds =\sum_{k=1}^{s} \alpha_k  \left( \sum_{l=1}^{q} \left(e_{\tilde{S}_1} \middle| {(-1)}^{l+1} e_{\tilde{S}_2} \right)^* \left[ \begin{array}{c} 0 \\ V^k_l \end{array} \right] \right) \\
&\ds =\sum_{k=1}^{s} \alpha_k \left( \sum_{l=1}^{q} \left(e_{1} \middle| {(-1)}^{l+1}e_{\tilde{s}_1} \right)^* V^k_l \right) \\
&\ds =\sum_{k=1}^{s} \alpha_k  {  \left(\notc{e_{1}}{q} \middle| \nota{ -e_{\tilde{s}_1} }{q} \right) }^* V^k.
\end{array}
$$
Combining those two facts the claim is proved.

\end{proof}

As a consequence of Lemma \ref{equivalences} it is sufficient to prove that (\ref{an equivalent to the Hautus test}) is true, and in fact it is a consequence of the Hautus test, and hypothesis (\ref{boundary Kalman condition}) :
\begin{lemma}
Assume that condition (\ref{boundary Kalman condition}) holds. Then, for all $q \in \mathbb{N}^*$ we have
\begin{equation}\label{almost Hautus test}
\ker \left( {\calA}_{q}^{*} - \theta I \right)
\, \cap \, \ker \notd{D_{i_1} | \cdots | D_{i_{r_D}} }{q}^*
\, \cap \, \ker { \left( \notb{B_{j_1}}{} \middle| \notb{B_{j_2}}{} \right) }^{*}
= \{0 \}, \quad \forall \theta \in \C.
\end{equation}
\end{lemma}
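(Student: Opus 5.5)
The plan is to reduce (\ref{almost Hautus test}) to the Hautus test (Proposition \ref{prop Hautus test}) applied to the reformulated condition (\ref{boundary Kalman condition 2}) of Remark \ref{another characterization of the boundary Kalman condition}. Fix $q \in \mathbb{N}^*$ and $\theta \in \C$, and take $W$ in the triple intersection of (\ref{almost Hautus test}). The goal is to show $W=0$. By Proposition \ref{prop Hautus test}, condition (\ref{boundary Kalman condition 2}) is equivalent to
$$\ker\left(\calA_q^*-\theta I\right) \, \cap \, \ker \notd{D}{q}^* \, \cap \, \ker \notc{B_1}{q}^* \, \cap \, \ker \nota{B_2}{q}^* = \{0\}.$$
So it suffices to show that $W$ also lies in $\ker \notd{D}{q}^*$ and in both $\ker \notc{B_1}{q}^*$ and $\ker \nota{B_2}{q}^*$.

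\textbf{Distributed part.} Write $W=(W_1,\ldots,W_q)$. Since $\calA_q$ is block diagonal, $\calA_q^*W=\theta W$ reads $A^*W_l=(\theta+\lambda_l)W_l$ for each $l$. Since $D_{i_k}^*W_l=0$, we get $(A^mD_{i_k})^*W_l=(\theta+\lambda_l)^m D_{i_k}^*W_l=0$ for every $m$. Hence $W_l$ is orthogonal to the span of the family (\ref{independent family}). By the construction in Step 1 of Lemma \ref{lemma for the construction of a basis}, every column $D_k$ of $D$ lies in that span, so $D^*W_l=0$ for all $l$, that is, $W\in\ker\notd{D}{q}^*$.

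\textbf{Boundary part.} This is the main obstacle. When $r_B=2$, as in the case treated, $\notb{B_{j_1}}{}$ and $\notb{B_{j_2}}{}$ are exactly $\notc{B_1}{q}$ and $\nota{B_2}{q}$, and we are done. In general, a control $\notb{B}{}$ not selected by Lemma \ref{lemma for the construction of a basis} was discarded because it, together with its iterates $\calA_q^m\notb{B}{}$, lies in the span $\mathcal{S}$ of the columns of $P_q$ built before it. I would show $\notb{B}{}^*W=0$ by pairing. Every column of $P^D_q$ is a block vector $(A^mD_{i_k})$ placed in one block, and it is annihilated by $W$ by the distributed part. Every column $\calA_q^m\notb{B_{j_i}}{}$ satisfies
$$\left(\calA_q^m\notb{B_{j_i}}{}\right)^*W=\theta^m\,\notb{B_{j_i}}{}^*W=0.$$
Hence $W\perp\mathcal{S}$, and in particular $\notb{B}{}^*W=0$.

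The delicate point is that the selection in Lemma \ref{lemma for the construction of a basis} may depend on $q$, and discarded controls are only known to lie in $\mathcal{S}$ when the iteration stops. I would therefore use the stopping criterion $\rank P_q=nq$: then $\mathcal{S}=\R^{nq}$, and $W\perp\mathcal{S}$ directly gives $W=0$. This makes the appeal to (\ref{boundary Kalman condition 2}) a consistency check rather than a necessity. The main care needed is verifying that the iterates $\calA_q^m\notb{B_{j_i}}{}$ with $m<\tilde s_i$ are indeed columns of $P_q$, which holds by definition of $P^B_q$.
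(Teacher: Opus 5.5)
Your proof is correct and is essentially the paper's argument. The paper merges the three kernels into one, converts the resulting Hautus condition into the rank condition (\ref{last reformulation}), and notes that this matrix contains every column of the invertible matrix $P_q$ from Lemma \ref{lemma for the construction of a basis}; you prove the same implication by hand, pairing $W$ against each column $\calA_q^m\notb{B_{j_i}}{}$ and each block column $A^mD_{i_k}$ to get $P_q^{*}W=0$, which spares you the reformulation step. Your first route is also valid in the treated case $r_B=2$: there $W\in\ker\notd{D}{q}^{*}$ because every $D_k$ lies in the span of (\ref{independent family}), and then (\ref{boundary Kalman condition 2}) together with Proposition \ref{prop Hautus test} gives $W=0$ without using the invertibility of $P_q$.
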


\begin{proof}[Proof]
(\ref{almost Hautus test}) can be rewritten as
$$
\ker \left( {\calA}_{q}^{*} - \theta I \right)
\, \cap \, \ker { \left( \notb{B_{j_1}}{} \middle| \notb{B_{j_2}}{} \middle|
\notd{D_{i_1} | \cdots | D_{i_{r_D}} }{q} \right) }^{*}
= \{0 \}, \quad \forall \theta \in \C.
$$
which is equivalent to (by the Hautus test)
$$\rank{ \kalm{\calA_q}{  \left( \notb{B_{j_1}}{} \middle| \notb{B_{j_2}}{} \middle|
\notd{D_{i_1} | \cdots | D_{i_{r_D}} }{q} \right) }{nq} }=nq,$$
and this last formulation is also equivalent to
\begin{equation}\label{last reformulation}
\rank{ \left[ \kalm{\calA_q}{ \left( \notb{B_{j_1}}{} \middle| \notb{B_{j_2}}{} \right) }{nq} \middle| \notd{\kalm{A}{ \left(D_{i_1} | \cdots | D_{i_{r_D}} \right) }{n} }{q} \right] }=nq,
\end{equation}
in the same way as condition (\ref{boundary Kalman condition}) is equivalent to condition (\ref{boundary Kalman condition 2}) (see Remark \ref{another characterization of the boundary Kalman condition}, item 2).
Now thanks to Lemma \ref{lemma for the construction of a basis} we can see that (\ref{last reformulation}) holds (observe that we have more powers of $\calA_q$ and $A$ in (\ref{last reformulation})).

\end{proof}

Let us now prove the necessary part of Theorem \ref{boundary Kalman condition theorem}:

\textbf{Step 4} Suppose that there exists $q_0 \in \mathbb{N}^*$ such that
$$\rank{ \left[ \ \kalm{\calA_{q_0} }{  \left( \notc{B_1}{q_0} \middle| \nota{B_2}{q_0} \right)  }{nq_0} \ \middle| \ \notd{ \kalm{A}{D}{n}}{q_0} \ \right] } <nq_0.$$
Thanks to the other characterization of condition (\ref{boundary Kalman condition}) (see Remark \ref{another characterization of the boundary Kalman condition}, item 2) this means we have
\begin{equation}\label{eq000}
\rank{ \kalm{\calA_{q_0} }{ \left( \notc{B_1}{q_0} \middle| \nota{B_2}{q_0}  \middle| \notd{D}{q_0} \right) }{nq_0} }< nq_0.
\end{equation}
Thus, there exists $\Psi^T \in \R^{nq_0}$ with $\Psi^T \neq 0$ such that $\Psi(t)=e^{\calA_{q_0}^*(T-t)}\Psi^T$ satisfies (see Proposition \ref{prop Hautus test})
\begin{equation}\label{Kalman for ODE}
{ \left( \notc{B_1}{q_0} \middle| \nota{B_2}{q_0}  \middle| \notd{D}{q_0} \right) }^{*} \Psi(t) = 0, \quad \forall t \in [0,T].
\end{equation}
Let us write $\Psi(t)$ as follow:
$$\begin{array}{ll}
\Psi(t)=e^{\calA_{q_0}^*(T-t)}\Psi^T & =
\left[\begin{array}{ccccc}
e^{A_1^*(T-t)} & 0 & \cdots & \cdots & 0 \\
0 & e^{A_2^*(T-t)} & \ddots & & \vdots \\
\vdots & \ddots & \ddots & \ddots & \vdots \\
\vdots &  & \ddots & \ddots &0 \\
0 & \cdots &  \cdots & 0 & e^{A_{q_0}^*(T-t)}
\end{array} \right]
\left[\begin{array}{c}
\Psi^T_1 \\ \Psi^T_2 \\ \vdots \\  \vdots \\ \Psi^T_{q_0}
\end{array}\right] \\
& =\left[\begin{array}{c}
e^{(-\lambda_1 I + A^*)(T-t)} \Psi^T_1 \\ e^{(-\lambda_2 I + A^*)(T-t)}  \Psi^T_2 \\ \vdots \\  \vdots \\  e^{(-\lambda_{q_0} I + A^*)(T-t)}  \Psi^T_{q_0}
\end{array}\right]
=\left[\begin{array}{c}
\Psi_1(t) \\ \Psi_2(t) \\ \vdots \\  \vdots \\  \Psi_{q_0}(t)
\end{array}\right], \quad \forall t \in [0,T].
\end{array}
$$
Thus (\ref{Kalman for ODE}) gives
$$
\left[\begin{array}{cccccc}
B_1^* & B_1^* & \cdots & \cdots & \cdots  & B_1^* \\
-B_2^* & B_2^* & \cdots & \cdots & \cdots  & {(-1)}^{q_0}B_2^* \\
D^* & 0 & \cdots& \cdots & \cdots  &0 \\
0 & D^* & \ddots &  &  & \vdots \\
\vdots & \ddots & \ddots&  & \ddots & \vdots \\
\vdots &   &  \ddots & & \ddots & 0 \\
0 & \cdots &  \cdots & & 0 & D^* 
\end{array} \right]
\left[\begin{array}{c}
\Psi_1(t) \\ \Psi_2(t) \\ \vdots \\ \vdots \\ \vdots \\ \vdots \\ \Psi_{q_0}(t)
\end{array}\right]
=0, \quad \forall t \in  [0,T],
$$
i.e.
\begin{equation}\label{unique continuation hypothesis}
\forall t \in [0,T], \quad \left\{\begin{array}{l}
\ds \sum_{k=1}^{q_0} B_1^* \Psi_k(t)=0, \\
\ds \sum_{k=1}^{q_0} {(-1)}^{k} B_2^* \Psi_k(t)=0, \\
\ds D^*\Psi_k(t)=0, \quad \forall k \in \{1, \ldots , q_0\}.
\end{array}\right.
\end{equation}
Let us now prove that the observability inequality (\ref{observability inequality}) fails. To this aim we define $\Phi^T \in H^{1}_{0}(\Omega;\R^n )$ by
$$\Phi^T=\sum_{k=1}^{q_0} {\lambda_k}^{-1/2} \Psi^T_k  \phi_k.$$
and let $\Phi$ be the solution to
$$
\left\{
\begin{array}{ll}
&-\pt \Phi = \deltax \Phi + A^*\Phi \mbox{ in } Q_{T}, \\
&\Phi=0 \mbox{ on } \Sigma_{T}, \\
&\Phi(T)=\Phi^T \mbox{ in } \Omega,
\end{array}
\right.
$$
i.e.
\begin{multline}\label{explicite formula}
\Phi(t) \ds =\sum_{k=1}^{q_0} e^{(-\lambda_k I + A^*)(T-t)} 
\left[\begin{array}{c}
{\left\langle \Phi^T_1 , \phi_k \right\rangle}_{L^2} \\
\vdots \\
{\left\langle \Phi^T_n , \phi_k \right\rangle}_{L^2}
\end{array}\right]
\phi_k \\
=\sum_{k=1}^{q_0} {\lambda_k}^{-1/2} e^{(-\lambda_k I + A^*)(T-t)}  \Psi^T_k \phi_k=\sum_{k=1}^{q_0} {\lambda_k}^{-1/2} \Psi_k(t) \phi_k.
\end{multline}
From (\ref{unique continuation hypothesis}) we obtain
$$\forall t \in (0,T), \forall i \in \{1,\ldots,n_D\}, \quad D_i^*\Phi(t) =\sum_{k=1}^{q_0} {\lambda_k}^{-1/2}  D_i^* \Psi_k(t) \phi_k 
=0 \mbox{ in } \Omega,$$
and since the space dimension is $N=1$ we also obtain
$$\forall t \in (0,T), \quad B_1^*\px \Phi(t,0)=  B_1^*\sum_{k=1}^{q_0}  \Psi_k(t)  \underbrace{ {\lambda_k}^{-1/2}  \px {\phi}_{k}(0) }_{=1}  =0,$$
and
$$\forall t \in (0,T), \quad B_2^*\px \Phi(t,1)= B_2^* \sum_{k=1}^{q_0}  \Psi_k(t)  \underbrace{ {\lambda_k}^{-1/2}  \px {\phi}_{k}(1)  }_{= {(-1)}^{k} }=0.$$
Finally let us remark that $\Phi(0) \neq 0$ since $\Psi^T \neq 0$ (see (\ref{explicite formula})). As a consequence the observability inequality (\ref{observability inequality}) fails and so does the null-controllability of (\ref{syst3}).
\end{proof}

\section{The Carleman estimate}

Recall that all along this section, no boundary controls are considered and the space dimension $N$ is arbitrary.
The aim is to prove Theorem \ref{t-dep} and as said before this latter is a consequence of a Carleman estimate.

\subsection{A Carleman estimate for cascade systems}

Let us introduce the framework in which the Carleman estimate will be established. We consider this time the following $n\times n$ parabolic system:
\begin{equation}\label{cascade system}
\left\{ 
\begin{array}{ll}
&\pt y = \Delta y + Cy + e_{S_1} u_1(t,x) 1_{\omega_1}(x) +\ldots+ e_{S_r}u_r(t,x) 1_{\omega_r}(x) \mbox{ in } Q_{T}, \\
&y=0 \mbox{ on } \Sigma_{T}, 
\end{array}
\right.
\end{equation}
where $r \in \{1,\ldots, n\}$ is the number of controls, $u_1 \in L^2(Q_{T} ),\ldots, u_r \in L^2(Q_{T})$ are the controls and where $C=C(t,x) \in L^{\infty}\left(Q_{T};\mat{n} \right)$ is a matrix with the following block cascade type structure:
\begin{equation}\label{cascade form}
C=\left[\begin{array}{ccccc}
C_{11} & \times & \times & \cdots & \times \\
0 & C_{22} & \times & \cdots & \times \\
\vdots & \ddots & \ddots & \ddots & \vdots \\
\vdots &  & \ddots & \ddots & \times \\
0 & \cdots &  \cdots & 0 & C_{rr}
\end{array} \right]
\mbox{ with }
C_{jj}=\left[\begin{array}{ccccc}
c_{11}^{j} & c_{12}^{j} & c_{13}^{j} & \cdots & c_{1s_j}^{j} \\
c_{21}^{j} & c_{22}^{j} & c_{23}^{j} & \cdots & c_{2s_j}^{j} \\
0 & c_{32}^{j} & c_{33}^{j} & \cdots & c_{3s_j}^{j} \\
\vdots & \ddots & \ddots & \ddots & \vdots \\
0 & \cdots & 0 & c_{s_j s_{j-1}}^{j} & c_{s_j s_j}^{j}
\end{array}\right],
\end{equation}
where $s_j \in \mathbb{N}$ is the size of the bloc $C_{jj}$ (in particular we have $\sum_{j=1}^{r} s_j=n$) and where $S_1,\ldots, S_{r+1}$ are such that one control is exerted on each first equation of a block, this reads $S_i=1+\sum_{j=1}^{i-1} s_j$ for all $i \in \{1, \ldots, r+1\}$.

Those notations in mind, we have
\begin{theorem}\label{Carleman estimate for coupled linear parabolic systems of cascade type theorem}
Assume that for every $j\in \{1, \ldots, r\}$ there exists a nonempty open subset $\tilde{\omega}_j \subset \omega_j$ and $\underline{c}^j>0$ such that for every $i \in \{1, \ldots, s_j\}$ we have
\begin{equation}\label{non-zero hypothesis}
c^{j}_{i+1,i} \geq \underline{c}^j \mbox{ or } -c^{j}_{i+1,i} \geq \underline{c}^j \mbox{ on } (0,T) \times \tilde{\omega}_j.
\end{equation}

Then, there exist functions $\beta_1,\ldots,\beta_{r} \in C^2(\overline{\Omega})$ such that $0<\beta_1<\ldots<\beta_{r}$, there exists $C>0$, $s_0>0$ and $l_0>0$ such that, for every $\Phi^T \in L^2(\Omega;\R^n )$, the solution $\Phi$ to the system
\begin{equation}\label{adjoint system}
\left\{ \begin{array}{ll}
&-\pt \Phi = \Delta \Phi +C^*\Phi \mbox{ in } Q_{T}, \\
&\Phi=0 \mbox{ on } \Sigma_{T}, \\
&\Phi(T)=\Phi^T \mbox{ in } \Omega,
\end{array} \right.
\end{equation}
satisfies
$$\sum_{j=1}^{r} \sum_{i=S_{j}}^{S_{j+1}-1} \calJ_j(3(S_{j+1}-i),\Phi_i) \leq C \sum_{j=1}^{r}\iint_{(0,T) \times \omega_j} {(s\varphi)}^{ l_0 } e^{-2s \eta_j} {|\Phi_{S_j}|}^{2},$$
for all $s \geq s_0$. Here we have denoted
\begin{equation}\label{definition of calJ}
\calJ_j(d,q)= \iint_{Q_{T}} {(s\varphi)}^{d-2}e^{-2s \eta_j}  \abs{\gradx q}^{2} + \iint_{Q_{T}} {(s \varphi)}^d e^{-2s\eta_j} \abs{q}^{2},
\end{equation}
and $\varphi(t)={\left(t(T-t)\right)}^{-1}$, $\eta_j(t,x)=\beta_j(x)\varphi(t)$ for $j \in \{1,\ldots, r\}$.

\end{theorem}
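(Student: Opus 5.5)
The plan is to prove the estimate block by block, from the last block $C_{rr}$ up to the first, building each block's estimate from the scalar Fursikov--Imanuvilov Carleman estimate. Within each block I will run the cascade argument of González-Burgos--de Teresa (for one control and a cascade matrix with nonvanishing subdiagonal) and glue the blocks through the ordering $0<\beta_1<\dots<\beta_r$ of the weights. Because $C^*$ is block lower triangular, the adjoint system \eqref{adjoint system} restricted to block $j$ reads
\begin{equation*}
-\pt \Phi^{(j)} = \Delta \Phi^{(j)} + C_{jj}^*\Phi^{(j)} + \sum_{k<j} (\times)\,\Phi^{(k)},
\end{equation*}
where $\Phi^{(j)}=(\Phi_{S_j},\dots,\Phi_{S_{j+1}-1})$. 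So block $j$ is driven only by the earlier blocks $k<j$, and $C_{jj}^*$ is upper Hessenberg transposed. The component $\Phi_{S_j+i}$ appears in the equation of $\Phi_{S_j+i-1}$ through the coefficient $c^j_{i+1,i}$, which is bounded away from zero on $\tilde\omega_j$ by \eqref{non-zero hypothesis}.

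\textbf{Step 1 (scalar estimates).} For each $j$, choose $\beta_j=\alpha_0 - \psi_j$ type weights, with $\psi_j$ the usual function that has no critical points outside an open set $\omega_0\Subset\tilde\omega_1\cap\dots$. Since the $\tilde\omega_j$ may be disjoint, I instead take, for each $j$, a set $\omega_{0,j}\Subset\tilde\omega_j$ and a Fursikov--Imanuvilov function $\psi_j$ vanishing on $\partial\Omega$ with $|\nabla\psi_j|>0$ off $\omega_{0,j}$. I then set $\beta_j = M_j-\psi_j$, with constants $M_1<M_2<\dots<M_r$ spaced so that $\max \beta_j < \min\beta_{j+1}$. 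For each scalar component $\Phi_i$ with $i$ in block $j$, apply the scalar Carleman estimate with weight $\eta_j$ and power $d=3(S_{j+1}-i)$. The power is obtained by multiplying the standard $d=3$ estimate by $(s\varphi)^{d-3}$, whose time derivative is absorbed for $s$ large. This gives $\calJ_j(d,\Phi_i)\le C\bigl(\iint (s\varphi)^{d-3}e^{-2s\eta_j}|\text{rhs}|^2 + \iint_{(0,T)\times\omega_{0,j}}(s\varphi)^{d}e^{-2s\eta_j}|\Phi_i|^2\bigr)$.

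\textbf{Step 2 (absorbing couplings).} Right-hand side terms from components of the same block with higher index carry lower powers $3(S_{j+1}-i')$, where the gap is at least $3$. Terms of index at most $i$ within the block are absorbed for $s$ large; the powers were decreased by $3$ per step precisely for this. Terms from earlier blocks $k<j$ carry the weight $e^{-2s\eta_j}$, which is exponentially smaller than $e^{-2s\eta_k}$ because $\beta_j>\beta_k$. They are therefore dominated by $\calJ_k$ with any polynomial loss, once everything is summed. This is why $\beta$ must increase with $j$.

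\textbf{Step 3 (eliminating local terms).} This is the main obstacle. Within block $j$, the local term of $\Phi_{S_j+i}$ for $i\ge1$ is removed using the equation of $\Phi_{S_j+i-1}$. On $\tilde\omega_j$ we can write $\Phi_{S_j+i} = (c^j_{i+1,i})^{-1}\bigl(-\pt\Phi_{S_j+i-1}-\Delta\Phi_{S_j+i-1}-\dots\bigr)$, where the remaining terms involve components of index at most $i-1$ and earlier blocks. The cutoff $\xi\in C_c^\infty(\tilde\omega_j)$, equal to $1$ on $\omega_{0,j}$, is chosen for this purpose. I multiply by $\xi(s\varphi)^{d}e^{-2s\eta_j}\Phi_{S_j+i}$ and integrate by parts in $t$ and $x$. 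The derivatives fall onto $\Phi_{S_j+i}$ (controlled by $\calJ_j$ times $\epsilon$) and onto the weight. The result is $\epsilon\,\calJ_j(\cdot,\Phi_{S_j+i})$ plus $C_\epsilon$ times a local integral of $\Phi_{S_j+i-1}$ with a larger power of $s\varphi$. Here the sign condition and the $L^\infty$ lower bound on $c^j_{i+1,i}$ are used; differentiability of the coefficients is not needed, since derivatives are moved onto $\Phi$ and the weights rather than onto $c$, although the $\pt$ term may require care and a mild regularity of $c^j_{i+1,i}$. Iterating down to $\Phi_{S_j}$ leaves only a local term in $\Phi_{S_j}$ with some power $l_0$. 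The local terms of earlier-block components also show up in this elimination; they are bounded by global terms of those blocks, which are smaller by the weight ordering. Summing over $j$ and taking $s_0$ large completes the argument. Bookkeeping the powers so that every $\epsilon$-term is absorbed fixes $l_0$.
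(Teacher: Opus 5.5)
Your overall architecture matches the paper's. You use one scalar Carleman estimate per component, with weight $\eta_j$ and power $3(S_{j+1}-i)$. Intra-block couplings are absorbed through the drop of powers, and couplings from earlier blocks through the ordering $\beta_1<\dots<\beta_r$. Then the local terms are eliminated down to $\Phi_{S_j}$. One small correction in Step 2: the term $c_{i+1,i}\Phi_{i+1}$ in the equation of $\Phi_i$ enters with power exactly $3(S_{j+1}-i-1)$, which is the left-hand power of $\Phi_{i+1}$. So it is not absorbed by taking $s$ large. You need the geometric weights $C_1^{i-S_j}$ that the paper uses when summing over $i$; this works because the last component of a block has no such term.

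The genuine gap is in Step 3, which is exactly the content of Lemma~\ref{bootstrap} (the paper imports it from \cite[Section 4]{GBdT} rather than proving it). You multiply the equation of $\Phi_{S_j+i-1}$ by $\xi(s\varphi)^d e^{-2s\eta_j}\Phi_{S_j+i}$ and integrate by parts. What lands on $\Phi_{S_j+i}$ is $\pt\Phi_{S_j+i}$. If you move both space derivatives, which you must to end with a local $L^2$ term of $\Phi_{S_j+i-1}$ as you claim, $\Delta\Phi_{S_j+i}$ lands there too. Neither is controlled by $\calJ_j$, which contains only $|\gradx q|^2$ and $|q|^2$. Moving everything produces $(\pt-\Delta)(\xi(s\varphi)^d e^{-2s\eta_j}\Phi_{S_j+i})$, i.e.\ the forward heat operator acting on a solution of a backward equation, which is not of lower order. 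Regularity of $c^j_{i+1,i}$ is not the issue: if you keep $c^j_{i+1,i}|\Phi_{S_j+i}|^2$ on the left instead of dividing by it, it is never differentiated.

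The repair is to substitute $\pt\Phi_{S_j+i}=-\Delta\Phi_{S_j+i}-(C^*\Phi)_{S_j+i}$ from the equation of $\Phi_{S_j+i}$ itself, and to integrate each Laplacian by parts only once. The second-order terms then combine into $2\iint\xi(s\varphi)^d e^{-2s\eta_j}\gradx\Phi_{S_j+i}\cdot\gradx\Phi_{S_j+i-1}$, plus terms carrying derivatives of the weight. This substitution brings in the coupling with the next component $\Phi_{S_j+i+1}$. That is why Lemma~\ref{bootstrap} carries an $\epsilon\,\calJ_j$-term of $\Phi_{i+1}$ on its right-hand side; it is absent from your claimed output but harmless, since it sits at the left-hand power. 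The repair also leaves a local term in $\gradx\Phi_{S_j+i-1}$. You convert it into a local $L^2$ term on a slightly larger set by a weighted Caccioppoli estimate: multiply the equation of $\Phi_{S_j+i-1}$ by $\xi_2(s\varphi)^m e^{-2s\eta_j}\Phi_{S_j+i-1}$, with $\xi_2=1$ on the support of $\xi$. This is why the paper needs nested sets $\tilde{\tilde{\omega}}_j\Subset\mathcal{O}_{j,1}\Subset\cdots\Subset\tilde{\omega}_j$ rather than your single cutoff.

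Alternatively, use the Fursikov--Imanuvilov estimate, whose left-hand side also contains $(s\varphi)^{d-4}e^{-2s\eta_j}(|\pt q|^2+|\Delta q|^2)$. With that enlarged left-hand side your $\epsilon$-absorption is legitimate as written. With either repair the rest of your argument goes through.
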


\begin{proof}[Proof]

We adapt the proof of \cite[Theorem 1.1]{GBdT}, but we consider different weight functions on each block and we use the fact that we still can choose such functions in an ordered way.
First, let us rewrite system (\ref{adjoint system}) on each block as follows:
\begin{equation}\label{adjoint system rewrote}
\forall j \in \{1, \ldots,r \}, \quad \left\{
\begin{array}{ll}
&\ds -\pt \Phi_{i} = \Delta \Phi_{i} +\sum_{k=1}^{i} {c}_{ki}\Phi_k + {c}_{i+1,i}\Phi_{i+1}  \mbox{ in } Q_{T}, \quad \forall i \in \{S_j, \ldots S_{j+1}-2\}\\
& \\
&\ds -\pt \Phi_{ S_{j+1}-1 } = \Delta \Phi_{ S_{j+1}-1 } + \sum_{k=1}^{S_{j+1}-1 } {c}_{k,S_{j+1}-1 }\Phi_k \mbox{ in } Q_{T}, \\
& \\
&\ds \Phi_{i} =0 \mbox{ on } \Sigma_{T}, \quad \forall i \in \{S_j, \ldots, S_{j+1}-1\}.
\end{array}
\right.
\end{equation}
where we rewrote for convenience $C={(c_{ij})}_{1 \leq i,j \leq n}$. And let us recall the following Carleman estimate for one single parabolic equation (see \cite[Lemma 2.3]{IY}):
\begin{lemma}\label{Carleman estimate for one equation}
Let $\omega \subset \Omega$ be a non-empty open subset. For every $\underline{\beta}>0$, there exists a function $\beta \in C^2(\overline{\Omega})$ such that $\beta>\underline{\beta}$ and such that, for every $d \in \mathbb{R}$, there exist $C>0$, $s_0>0$ such that, for every  $\psi^T \in L^2(\Omega)$ and every $f \in L^2(Q_{T})$, the solution $\psi$ to
$$
\left\{ \begin{array}{ll}
&-\pt \psi =\Delta \psi + f(t,x) \mbox{ in } Q_{T}, \\
&\psi=0 \mbox{ on } \Sigma_{T}, \\
&\psi(T)=\psi^T \mbox{ in } \Omega,
\end{array}
\right.
$$
satisfies
\begin{multline*}
\iint_{Q_{T}} (s\varphi)^{d-2}e^{-2s \eta}  \abs{\gradx \psi}^{2} + \iint_{Q_{T}} (s\varphi)^de^{-2s\eta} \abs{\psi}^{2} \\
\leq C \left( \iint_{(0,T) \times \omega}(s\varphi)^d e^{-2s\eta} \abs{\psi}^{2} + \iint_{Q_{T}} (s\varphi)^{d-3} e^{-2s \eta} \abs{f}^{2} \right)
\end{multline*}
for all $s \geq s_0$. Where $\eta(t,x)=\beta(x)\varphi(t)$.
\end{lemma}

To start the proof of Theorem \ref{Carleman estimate for coupled linear parabolic systems of cascade type theorem}, let be given $\tilde{\tilde{\omega}}_1 \subset \subset \tilde{\omega}_1, \ldots , \tilde{\tilde{\omega}}_p \subset \subset \tilde{\omega}_p$. For every $j \in \{1,\ldots,r\}$ we apply Lemma \ref{Carleman estimate for one equation} with $\omega=\tilde{\tilde{\omega}}_j$ which enables us to construct functions $\beta_1,\ldots,\beta_{r} \in C^2(\overline{\Omega})$ such that $0<\beta_1<\ldots<\beta_{r}$ and such that each function $\Phi_i$, $S_j \leq i \leq S_{j+1}-1$, satisfies, due to the particular structure of $C^*$ (see (\ref{adjoint system rewrote})):
\begin{multline*}
\forall i \neq S_{j+1}-1, \quad
\calJ_j(3(S_{j+1}-i),\Phi_i)
\leq  \frac{1}{2} C_1 \bigg( \calL_j \left(\tilde{\tilde{\omega}}_j;3(S_{j+1}-i),\Phi_i \right) \\
+\calJ_j(3(S_{j+1}-1-i),\Phi_{i+1}) )
 +\sum_{k=1}^{i} \calJ_j(3(S_{j+1}-1-i), \Phi_k) \bigg),
\end{multline*}
and
$$\calJ_j(3,\Phi_{S_{j+1}-1})
\leq \frac{1}{2} C_1 \left( \calL_j \left(\tilde{\tilde{\omega}}_j;3(S_{j+1}-i),\Phi_{S_{j+1}-1} \right) 
+\sum_{k=1}^{S_{j+1}-1} \calJ_j(0, \Phi_k) \right),$$
for $s$ large enough, where here and in what follows we denote
$$\calL_k(\omega;d,q)= \iint_{(0,T) \times \omega}(s\varphi)^d e^{-2s\eta_k} {|q|}^{2}.$$
Summing over $i$ this gives
$$
\begin{array}{ll}
\ds \sum_{i=S_{j}}^{S_{j+1}-1} {C_1}^{i-S_j}  \calJ_j(3(S_{j+1}-i),\Phi_i)
\leq & \ds \sum_{i=S_{j}}^{S_{j+1}-1} \frac{1}{2} {C_1}^{i+1-S_j}\bigg(  \calL_j\left(\tilde{\tilde{\omega}}_j;3(S_{j+1}-i),\Phi_i\right) \\
& \ds + \sum_{k=1}^{i} \calJ_j(3(S_{j+1}-1-i),\Phi_k)  \bigg).
\end{array}
$$
Summing over $j$ this leads to
\begin{equation}\label{tired}
\begin{array}{ll}
\ds \sum_{j=1}^{r}\sum_{i=S_{j}}^{S_{j+1}-1}  \calJ_j(3(S_{j+1}-i),\Phi_i)
\leq & \ds {C_2} \bigg( \sum_{j=1}^{r} \sum_{i=S_{j}}^{S_{j+1}-1} \calL_j\left(\tilde{\tilde{\omega}}_j;3(S_{j+1}-i),\Phi_i\right) \\
& \ds +\sum_{j=1}^{r} \sum_{i=S_{j}}^{S_{j+1}-1} \sum_{k=1}^{i} \calJ_j(3(S_{j+1}-1-i),\Phi_k)  \bigg).
\end{array}
\end{equation}
Now let us remark that the last term in the right hand-side of (\ref{tired}) can be seen as the sum of two terms:
$$
\begin{array}{ll}
\ds \sum_{j=1}^{r} \sum_{i=S_{j}}^{S_{j+1}-1} \sum_{k=1}^{i} \calJ_j(3(S_{j+1}-1-i),\Phi_k) 
= &\ds \sum_{j=1}^{r} \sum_{i=S_{j}}^{S_{j+1}-1} \calJ_j(3(S_{j+1}-1-i),\Phi_i) \\
&\ds +\sum_{j=1}^{r} \sum_{i=S_{j}}^{S_{j+1}-1} \sum_{k=1}^{i-1} \calJ_j(3(S_{j+1}-1-i),\Phi_k),
\end{array}
$$
and for $s$ large enough the first one can be absorbed by the left hand-side of (\ref{tired}), because $S_{j+1}-1-i<S_{j+1}-i$, and the second one can also be absorbed by the left hand-side of (\ref{tired}) since the functions $\beta_j$ have been constructed such that
$$0<\beta_1 < \beta_2 < \ldots < \beta_r,$$
so that, for every $d_1,d_2 \geq 0$, there exists $C>0$ such that
$$(s \varphi)^{d_1}e^{-2s\eta_j} \leq C (s \varphi)^{d_2}e^{-2s\eta_{j-1}}, \quad \forall  j \in \{2,\ldots, r\}$$ 
for $s$ large enough. Thus we finally obtain
\begin{equation}\label{after carleman}
\ds \sum_{j=1}^{r}\sum_{i=S_{j}}^{S_{j+1}-1} \calJ_j(3(S_{j+1}-i),\Phi_i)
\ds \leq {C_3} \left( \sum_{j=1}^{r}\sum_{i=S_{j}}^{S_{j+1}-1}\calL_j \left(\tilde{\tilde{\omega}}_j;3(S_{j+1}-i),\Phi_i \right) \right)
\end{equation}
for $s$ large enough. To pursue we use the following lemma (see \cite[Section 4]{GBdT}):

\begin{lemma}\label{bootstrap}
Assume that (\ref{non-zero hypothesis}) holds. Then, for every $\epsilon>0$, $j \in \{1,\ldots r \}$, $i \in \{S_j+1,\ldots, S_{j+1}-1\}$ and $l \in \mathbb{N}$ and every open sets $\mathcal{O}_0, \mathcal{O}_1$ such that $\tilde{\tilde{\omega}}_j \subset \mathcal{O}_1 \subset \subset \mathcal{O}_0 \subset \tilde{\omega}_j$, there exist $C>0$, $s_0>0$ and $l_{1}(j,l),\ldots,l_{i-1}(j,l) \in \mathbb{N}$ such that the solution $\Phi$ to (\ref{adjoint system}) satisfies
$$
\begin{array}{ll}
\forall i \neq S_{j+1}-1, \quad
\calL_j(\mathcal{O}_1;l,\Phi_i) \leq & \epsilon \bigg( \calJ_j(3(S_{j+1}-i), \Phi_i)
+ \calJ_j(3(S_{j+1}-1-i), \Phi_{i+1}) \bigg) \\
&\ds +C \sum_{k=1}^{i-1} \calL_j(\mathcal{O}_0;l_k(j,l) , \Phi_k).
\end{array}
$$
and
$$\calL_j(\mathcal{O}_1;l,\Phi_{S_{j+1}-1})
\leq \epsilon \calJ_j(3, \Phi_{S_{j+1}-1})
+ C \sum_{k=1}^{S_{j+1}-2} \calL_j(\mathcal{O}_0;l_k(j,l) , \Phi_k).$$
for all $s \geq s_0$.
\end{lemma}

For each $j \in \{1, \ldots, r \}$ let be given $\tilde{\tilde{\omega}}_j \subset \subset \mathcal{O}_{j,1} \subset \subset ... \subset \subset \mathcal{O}_{j,s_j-1} \subset \subset \tilde{\omega_j}$.
Applying Lemma \ref{bootstrap} to $i=S_{j+1}-1$, $\mathcal{O}_1=\tilde{\tilde{\omega}}_j$, $\mathcal{O}_0=\mathcal{O}_{j,1}$, $l=3(S_{j+1}-i)=l^1_{j}$ and $\epsilon=\frac{1}{2 C_3 }$, we have
$$\calL_j(\tilde{\tilde{\omega}}_j;l^1_{j},\Phi_{S_{j+1}-1}) \leq \frac{1}{2 C_3 } \calJ_j(3, \Phi_{S_{j+1}-1}) + C_4 \sum_{k=1}^{S_{j+1}-2} \calL_j(\mathcal{O}_{j,1};l_k(j,l^1_{j}) , \Phi_k).$$
Back to (\ref{after carleman}) we obtain
$$\sum_{j=1}^{r} \sum_{i=S_{j}}^{S_{j+1}-1} \calJ_j(3(S_{j+1}-i),\Phi_i)
\leq C_5 \left( \sum_{j=1}^{r} \sum_{k=1}^{S_{j+1}-2} \calL_j(\mathcal{O}_{j,1};\max\left\{3(S_{j+1}-k),l_k(j,l^1_{j})\right\} , \Phi_k) \right).$$
Applying now Lemma \ref{bootstrap} to $i=S_{j+1}-2$, $\mathcal{O}_1=\mathcal{O}_{j,1}$, $\mathcal{O}_0=\mathcal{O}_{j,2}$, $l=\max\left\{6,l_{S_{j+1}-2}(j,l^1_{j})\right\}=l^2_{j}$ and $\epsilon=\frac{1}{2 C_5 }$, we obtain
\begin{multline*}
\sum_{j=1}^{r} \sum_{i=S_{j}}^{S_{j+1}-1} \calJ_j(3(S_{j+1}-i),\Phi_i) \\
\leq C_6 \left( \sum_{j=1}^{r} \sum_{k=1}^{S_{j+1}-3} \calL_j(\mathcal{O}_{j,2};\max\left\{ \max\left\{3(S_{j+1}-k),l_k(j,l^1_{j})\right\} ,l_k(j,l^2_{j})\right\} , \Phi_k) \right).
\end{multline*}
Iterating the process leads to the estimate
\begin{multline*}
\sum_{j=1}^{r} \sum_{i=S_{j}}^{S_{j+1}-1} \calJ_j(3(S_{j+1}-i),\Phi_i) \\
\leq C_7 \left( \sum_{j=1}^{r} \sum_{k=1}^{S_j} \calL_j(\mathcal{O}_{j,s_j-1};\max\left\{3(S_{j+1}-k), l_k(j,l^{1}_{j}), \ldots, l_k(j,l^{s_j-1}_{j}) \right\} , \Phi_k) \right),
\end{multline*}
which can be rewriten as follow by separating the term $k=S_j$
\begin{multline}\label{last argue}
\sum_{j=1}^{r}\sum_{i=S_{j}}^{S_{j+1}-1} \calJ_j(3(S_{j+1}-i),\Phi_i)
\leq C_7 \bigg( \sum_{j=1}^{r}\calL_j(\mathcal{O}_{j,s_j-1}; \max\left\{3s_j, l_{S_j}(j,l^{1}_{j}), \ldots, l_{S_j}(j,l^{s_j-1}_{j}) \right\} , \Phi_{S_j}) \\
+ \sum_{j=1}^{r}\sum_{k=1}^{S_j-1} \calL_j(\mathcal{O}_{j,s_j-1}; \max\left\{3(S_{j+1}-k), l_k(j,l^{1}_{j}), \ldots, l_k(j,l^{s_j-1}_{j}) \right\} , \Phi_k) \bigg).
\end{multline}
Let us now recall that we have chosen $\beta_1 < \beta_2 < \ldots < \beta_r$ so that the last term in the right-hand side of (\ref{last argue}) can be absorbed, for $s$ large enough, by the term of the left-hand side of (\ref{last argue}), no matter what the power of $s$ in those terms are.

\end{proof}

\subsection{Proof of Theorem \ref{t-dep}}

All the work is based on to the previous Carleman estimate.
Indeed, following the ideas of \cite{AKBDGB2} we can construct a change of basis thanks to condition (\ref{kalm for t-dep}) which leads to a cascade system (see \cite[Lemma 4.1]{AKBDGB2}) with possibly controls acting on different subdomains.
Applying the previous Carleman estimate we deduce the result.

\section{Further results and comments}

\begin{enumerate}

\item
Untill now we looked at the null-controllability properties for systems where the coefficients in front of the operator $-\Delta$ were the same on every equation but we can also consider systems where those coefficients are different; let us consider
\begin{equation}\label{system with diffusion matrix}
\left\{
\begin{array}{ll}
&\pt y = J \Delta y + Ay + D_1 u_1(t,x) 1_{\omega_1}(x) +\ldots+ D_{n_D} u_{n_D}(t,x) 1_{\omega_{n_D}}(x) \mbox{ in } Q_{T}, \\
&y=0 \mbox{ on } \Sigma_{T},
\end{array}
\right.
\end{equation}
with $J \in \mat{n}$ such that $J$ is diagonalizable with positive eigenvalues. Then there exists  also a Kalman rank condition, which is a simple extension of the one proved in \cite{AKBD}:

\begin{theorem}\label{Kalman condition with diffusion matrix theorem}
Under the previous assumption on $J$, system (\ref{system with diffusion matrix}) is null-controllable if and only if
\begin{equation}\label{Kalman condition with diffusion matrix}
\rank{ \kalm{-\lambda_kJ+A}{D}{n} }=n, \quad \forall k \in \mathbb{N}^*.
\end{equation}
\end{theorem}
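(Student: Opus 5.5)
The plan is to reduce Theorem \ref{Kalman condition with diffusion matrix theorem} to the single-control result of \cite{AKBD}, in the same way Corollary \ref{Distributed Kalman rank condition theorem} reduces to the case of one control region. We split the argument into necessity and sufficiency, and in both parts we project onto the eigenfunctions $\phi_k$.

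\textbf{Necessity.} Suppose (\ref{Kalman condition with diffusion matrix}) fails for some $k_0$. By Proposition \ref{prop Hautus test} there is $\Psi^T\neq 0$ such that $\Psi(t)=e^{(-\lambda_{k_0}J^*+A^*)(T-t)}\Psi^T$ satisfies $D^*\Psi(t)=0$ for all $t$. We then set $\Phi(t,x)=\Psi(t)\phi_{k_0}(x)$. This is exactly the solution of the adjoint system $-\pt\Phi=J^*\Delta\Phi+A^*\Phi$ with Dirichlet conditions and $\Phi(T)=\Psi^T\phi_{k_0}$. It satisfies $D_i^*\Phi\equiv 0$ in $Q_T$ for every $i$, while $\Phi(0)\neq 0$. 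Hence the observability inequality, which is the analogue of Proposition \ref{observability inequality proposition} for this system, fails. This part works for any $N$ and does not depend on the $\omega_i$.

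\textbf{Sufficiency.} Here we mimic Step 1 of the proof of Theorem \ref{boundary Kalman condition theorem}. The difficulty is that the change of basis $P$ must now be compatible with $J$ as well as $A$, and we cannot rely on $P$ commuting with $J$. So we instead argue at the level of the adjoint. Following \cite{AKBD}, the single-control proof proceeds in two stages. First, condition (\ref{Kalman condition with diffusion matrix}) is shown to be equivalent to $\rank[A:D]$-type conditions on a finite-dimensional object: only finitely many $k$ matter, by the analogue of (\ref{eigenvalues of A}), since for large $\lambda_k$ the matrix $-\lambda_kJ+A$ behaves like $-\lambda_k J$. Second, a spectral/Carleman argument proves observability on the low modes and handles the high modes separately. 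We will reproduce this with $D$ replaced by the full matrix $[D_1|\cdots|D_{n_D}]$, keeping track of which region each control acts on.

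The key reduction is to pick a common open set $\omega_0\subset\bigcap_i\omega_i$. That intersection may be empty, so this only works when the $\omega_i$ overlap. In general we instead observe that the observability inequality for (\ref{system with diffusion matrix}) with controls on the $\omega_i$ follows from the one-region case if, for each $i$, we can bound $\int_0^T\!\int_{\omega_0}|D_i^*\Phi|^2$ by $\int_0^T\!\int_{\omega_i}|D_i^*\Phi|^2$ plus absorbable terms. This is where the ordered-weights idea of Theorem \ref{Carleman estimate for coupled linear parabolic systems of cascade type theorem} enters: we choose weights $\beta_i$ adapted to each $\omega_i$, ordered as $0<\beta_1<\dots<\beta_{n_D}$, so that lower-order cross terms are absorbed.

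The main obstacle is that $J$ is not a multiple of the identity. Because of this, the block-triangularization trick is not available, and it is unclear whether a Carleman estimate with different weights per control region can be run for $J\Delta$. My first attempt would be to diagonalize $J=Q\,\mathrm{diag}(d_1,\dots,d_n)Q^{-1}$. This decouples the principal part into scalar heat equations with diffusions $d_l$, to which Lemma \ref{Carleman estimate for one equation} applies after rescaling. I would then handle the coupling through $Q^{-1}AQ$ and $Q^{-1}D$ by the bootstrap of Lemma \ref{bootstrap}, using (\ref{Kalman condition with diffusion matrix}) to guarantee the needed nondegeneracy.
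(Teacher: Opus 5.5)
Your necessity argument is correct. If $\rank \kalm{-\lambda_{k_0}J+A}{D}{n}<n$, the one-mode adjoint solution $\Phi(t,x)=e^{(-\lambda_{k_0}J^*+A^*)(T-t)}\Psi^T\phi_{k_0}(x)$ has $D_i^*\Phi\equiv 0$ for all $i$ and $\Phi(0)\neq 0$, exactly as in Step 4 of the proof of Theorem \ref{boundary Kalman condition theorem}.

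The sufficiency part, however, is not a proof, and the mechanisms you propose would not close it.
\begin{itemize}
\item Transferring the observation of $D_i^*\Phi$ from $\omega_i$ to a common $\omega_0$ ``up to absorbable terms'' would need a Carleman estimate for $D_i^*\Phi$. But this scalar does not satisfy a closed second-order parabolic equation: for $J\neq I$, $D_i^*J^*\Delta\Phi$ is not even $\Delta(D_i^*\Phi)$ in general. Moreover, the source term involves the whole of $\Phi$.
\item The ordered weights $\beta_1<\dots<\beta_r$ of Theorem \ref{Carleman estimate for coupled linear parabolic systems of cascade type theorem} only absorb coupling running in one direction. A term $e^{-2s\eta_j}|\Phi_k|^2$, with $\Phi_k$ in a block $j'$ such that $\beta_{j'}>\beta_j$, can never be absorbed by $e^{-2s\eta_{j'}}|\Phi_k|^2$. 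So they need the block-triangular form produced by a change of basis commuting with the principal part. That is exactly what is lost when $J\neq I$, since the principal part becomes $P^{-1}JP\,\Delta$, which is coupled in general.
\item Diagonalizing $J$ does not help either. $Q^{-1}AQ$ is then a full matrix, with neither the cascade structure nor the sign condition (\ref{non-zero hypothesis}) that Lemma \ref{bootstrap} requires, and the per-mode rank conditions do not produce them.
\end{itemize}
Also, the finiteness in $k$ does not come from $-\lambda_kJ+A$ ``behaving like'' $-\lambda_kJ$. For $J=I$, the rank of $\kalm{-\lambda_kI}{D}{n}$ is only $\rank D$, whereas that of $\kalm{-\lambda_kI+A}{D}{n}$ is $\rank\kalm{A}{D}{n}$ for every $k$. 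The correct reason is that the maximal minors are polynomials in $\lambda_k$.

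The missing idea is that the different regions need not be handled through a Carleman estimate at all. You point to the right place, \cite{AKBD}, but you never locate where the control region enters that proof. The paper keeps its whole architecture: Theorem 1.1 there follows from Theorem 1.3, which follows from Theorems 1.2 and 2.1. Theorem 2.1 is used unchanged. Besides redefining $\mathcal{K}$, the only modification is in the proof of Theorem 1.2. There, the scalar estimate (Theorem 3.2 there) is applied to $\phi=D_i^*\varphi$ on its own set $\omega_i$, instead of to $\phi=(B^*\varphi)_i$ on the common $\omega$.

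Because that estimate acts on one observed scalar at a time, distinct or even disjoint regions cost nothing. For instance, take a low-frequency (Lebeau--Robbiano type) adjoint state $\varphi=\sum_{\lambda_k\le\mu}\varphi_k(t)\phi_k$. Each $D_i^*\varphi(t)$ is a scalar combination of eigenfunctions, so its $L^2(\Omega)$-norm is bounded by $Ce^{C\sqrt{\mu}}$ times its $L^2(\omega_i)$-norm. Summing over $i$ gives the same mode-by-mode quantity $\int_0^T\sum_{\lambda_k\le\mu}|D^*\varphi_k(t)|^2\,dt$ as in the one-region case, where only $\rank\kalm{-\lambda_kJ+A}{D}{n}=n$ is used. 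No common subregion, no ordered weights, and no structure on $J$ beyond diagonalizability with positive eigenvalues are needed.
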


When $J$ is the identity matrix condition (\ref{Kalman condition with diffusion matrix}) is indeed equivalent to condition (\ref{distributed Kalman condition}) since $\rank{ \kalm{-\lambda_kI+A}{D}{n} }=\rank{ \kalm{A}{D}{n} }$ for all $k\in \mathbb{N}^*$.

Theorem \ref{Kalman condition with diffusion matrix theorem} can be proved by slightly changing the proof of Theorem 1.1 of \cite{AKBD}. Indeed, with the notations of \cite{AKBD}, changing the definition of $\cal{K}$ one can see that Theorem 1.1 is still a consequence of Theorem 1.3 and Theorem 1.3 is still a consequence of Theorem 1.2 and Theorem 2.1. The proof of Theorem 2.1 remains unchanged and the proof of Theorem 1.2 can be adapted to our case by applying Theorem 3.2 to $\phi=D_i^*\varphi$ instead of $\phi={\left(B^*\varphi \right)}_{i}$.

\item
All the results of section \ref{section n_B} and Theorem \ref{Kalman condition with diffusion matrix theorem} remain true if we replace the operator $-\Delta$ by more general elliptic operators $-R$, of the form
$$\left\{\begin{array}{ll}
& \ds Ry=\sum_{i,j=1}^{N} \partial_i \left( r_{ij}(x) \partial_j y \right),\\
&\ds  r_{ij} \in W^{1,\infty}(\Omega), \quad r_{ij}=r_{ji} \mbox{ in } \Omega, \quad \forall i,j \in \{1,\ldots,N\}, \\
&  \ds \exists \ \underline{r}>0, \quad \sum_{i,j=1}^{N} r_{ij} \xi_i \xi_j \geq \underline{r} {|\xi|}^{2} \mbox{ in } \Omega, \quad \forall \xi \in \mathbb{R}^N.
\end{array}\right.
$$

\item
In this paper we used a particular strategy which does not apply to many other problems. For instance the case of space varying coefficients can not be analyzed the same way. Indeed we did a change of variable so we used the fact that $\Delta$ commutes with $P_1$.

As our result is based on the one of \cite{AKBGBdT}, the $N$-dimensional case with $N>1$ is still open.

\end{enumerate}
\section*{Acknowledgments}

I would like to thank Assia Benabdallah and Franck Boyer for their advices and
remarks.

\bibliographystyle{plain}
\bibliography{null-controllability-parabolic-systems}

\end{document}